\theoremstyle{plain}
\newtheorem{theorem}{Theorem}[section]
\newtheorem{lemma}[theorem]{Lemma}
\newtheorem{corollary}[theorem]{Corollary}
\theoremstyle{definition}
\newtheorem{definition}{Definition}[section]
\theoremstyle{remark}
\newtheorem{remark}{Remark}[section]
\numberwithin{equation}{section}
\newcommand{\AAi}{\mathcal A_1}
\newcommand{\AAm}{\mathcal A_m}
\newcommand{\UU}{\mathcal U}
\newcommand{\CC}{\mathbb C}
\newcommand{\RR}{\mathbb R}
\def\supp{\mathop{\operatorname{supp}}}
\newcommand{\LeftEqNo}{\let\veqno\@@leqno}
\DeclareFontFamily{U}{matha}{\hyphenchar\font45}
\DeclareFontShape{U}{matha}{m}{n}{
      <5> <6> <7> <8> <9> <10> gen * matha
      <10.95> matha10 <12> <14.4> <17.28> <20.74> <24.88> matha12
      }{}
\DeclareSymbolFont{matha}{U}{matha}{m}{n}
\DeclareFontFamily{U}{mathx}{\hyphenchar\font45}
\DeclareFontShape{U}{mathx}{m}{n}{
      <5> <6> <7> <8> <9> <10>
      <10.95> <12> <14.4> <17.28> <20.74> <24.88>
      mathx10
      }{}
\DeclareSymbolFont{mathx}{U}{mathx}{m}{n}
\DeclareMathAccent{\widecheck}{0}{mathx}{"71}
\DeclareMathAccent{\wideparen}{0}{mathx}{"75}
\DeclareMathDelimiter{\vvvert}{0}{matha}{"7E}{mathx}{"17}
\newcommand{\oscU}{\text{osc}_{\UU}}
\newcommand{\oscUG}{\text{osc}_{\UU,\Gamma}}
\newcommand{\oscUGd}{\text{osc}_{\UU^\delta,\Gamma}}
\newcommand{\CoY}{\text{Co}Y}
\newcommand{\CoYt}{\widetilde{\text{Co}}Y}
\newcommand{\cb}{\color{blue}}
\newcommand{\Cmu}{C_{m,\UU}}
\newcommand{\Yf}{Y^\flat}
\newcommand{\Yn}{Y^\natural}
\newcommand{\dy}{d\mu(y)}
\newcommand{\dx}{d\mu(x)}
\newcommand{\UP}{U_\Phi}
\newcommand{\SP}{S_\Phi}
\begin{document}

\author{Nicki Holighaus$^\dag$}
\address{$^\dag$ Acoustics Research Institute Austrian Academy of Sciences, 
Wohllebengasse 12--14, A-1040 Vienna, Austria}
\email{nicki.holighaus@oeaw.ac.at, peter.balazs@oeaw.ac.at}
\author{Peter Balazs$^\dag$}

\title[Discretization in generalized coorbit spaces]{Discretization in generalized coorbit spaces: extensions, annotations and errata for ``Continuous Frames, 
Function Spaces and the Discretization Problem'' by M. Fornasier and H. Rauhut} 

\date{Last updated: \today}

 \begin{abstract}
   During the process of writing the manuscript \cite{bahowi15}, the work \cite{fora05}
   by Fornasier and Rauhut was one of the major foundations of our results and, naturally, 
   we found ourselves going back to reading that contribution once and again.
   
   In particular in Section 5, which is concerned with the discretization problem, 
   we have found some typographical errors, small inaccuracies and some parts that we just would 
   have wished to be slightly more accessible. Finally, for our own theory, a generalization of a central definition required us to verify that all the derivations in 
   \cite[Section 5]{fora05} still hold after the necessary modifications.

   Considering the importance of the results in \cite{fora05} for the community, this was reason enough 
   to start this side project of re-writing that least accessible portion of Fornasier and Rauhut's 
   manuscript, eliminating the errors we found, adding annotations where we consider them useful and
   modifying the results to take into account the generalization we require for our own work.
   
   What you see is the result of our endeavor, a one-to-one substitute for \cite[Section 5]{fora05}.
\end{abstract}
  
  \maketitle
  
  \section{Disclaimer}
    The manuscript at hand is no complete scientific paper, but supposed to 
replace Section 5 ``Discrete Frames'' in \cite{fora05}, an open access version 
of which is available at \url{http://arxiv.org/abs/math/0410571}. We therefore 
use the notation and results in Sections 1-4 therein without re-introducing 
them. Where applicable, the text in \cite{fora05} will be reproduced 1:1, 
extended where it was deemed useful and modified where necessary. Changes 
and additions are contained in {\cb 
blue}, except for minor efforts to make notation more consistent. Major 
changes are justified in annotations. For definitions, results, equations and so 
on, we use the enumeration scheme from the arXiv version of \cite{fora05}, more precisely
\url{http://arxiv.org/abs/math/0410571v1}.\\

\emph{If you find this manuscript useful for your own work, we kindly ask you to cite
our associated contribution~\cite{bahowi15}, to which this manuscript can be considered 
an online addendum. If you happen to find any oversights of our own or you have any 
suggestions for improvement, please contact us through the e-mail address provided on 
the last page of this manuscript.}
    
\setcounter{section}{4}    
  \section{Discrete Frames}\label{sec:5}
    In this section we investigate conditions under which one can extract a 
discrete frame from the continuous one. In particular, we will derive atomic 
decompositions and \emph{Banach frames} for the associated coorbit spaces. 
    
    The basic idea is to cover the index set $X$ by some suitable covering $\UU 
= (U_i)_{i\in I}$ with countable index set $I$ such that the kernel $R$ does 
not 
``vary too much'' on each set $U_i$. This variation is measured by an auxiliary 
kernel\footnote{Even for simple examples, e.g. the kernel associated to the 
short-time Fourier transform, cf. \cite{bahowi15}, there is strong evidence 
that 
the conditions derived in this chapter cannot be fulfilled by the kernel 
$\oscU$ 
proposed in \cite{fora05}.} {\cb $\oscUG(x,y)$} associated to $R$. Choosing points 
$x_i\in U_i$, $i\in I$, we obtain a sampling of the continuous frame 
$\{\psi_x\}_{x\in X}$. Under certain conditions on $\oscUG$ the sampled system 
$\{\psi_{x_i}\}_{i\in I}$ is indeed a frame for $\mathcal H$, {\cb respectively 
a Banach frame (atomic decomposition) for $\CoY$ ($\CoYt$).}
    
    We start with a definition.
    
    \begin{definition}\label{def:5.1}
      A family $\UU = (U_i)_{i\in I}$ of subsets of $X$ is called 
\emph{(discrete) admissible covering} of $X$ if the following conditions are 
satisfied.
      \begin{itemize}
      \item Each set $U_i$, $i\in I$ is relatively compact and has non-void 
interior.
      \item It holds $X = \bigcup_{i\in I} U_i$.
      \item There exists some constant $N>0$ such that 
      \begin{equation}\label{eq:5.1}
	\sup_{j\in I} \#\{ i\in I,~U_i\cap U_j \neq \emptyset\} \leq N < 
\infty.
      \end{equation}
      \end{itemize}
      Furthermore, we say that an admissible covering $\UU = (U_i)_{i\in I}$ 
is 
\emph{moderate} if it fulfills the following additional conditions.
      \begin{itemize}
	\item There exists some constant $D>0$ such that $\mu(U_i)\geq D$ for 
all $i\in I$.
	\item There exists a constant $\widetilde{C}$ such that 
	\begin{equation}\label{eq:5.2}
	  \mu(U_i)\leq \widetilde{C}\mu(U_j)\quad \text{for all $i,j$ with } 
U_i\cap U_j \neq \emptyset.
	\end{equation}
      \end{itemize}
    \end{definition}
    
    Note that the index set $I$ is countable because $X$ is $\sigma$-compact. 
    We remark further that we do not require the size of the sets $U_i$ 
(measured with $\mu$) 
    to be bounded from above. We only require a lower bound. Condition 
\eqref{eq:5.2} means that
    the sequence $(\mu(U_i))_{i\in I}$ is $\UU$-moderate in the sense of 
\cite[Definition 3.1]{fegr85}.
    If the sets $U_i$ do not overlap at all, i.e. they form a partition, then 
this condition in satisfied 
    trivially. A recipe for the construction of more general admissible 
coverings with property \eqref{eq:5.2}
    is discussed in \cite{fe87} together with some relevant examples.
    
    For the aim of discretization we have to restrict the class of admissible 
weight functions (resp. the class of 
    function spaces $Y$). From now on we require that there exists a moderate 
admissible covering $\UU = (U_i)_{i\in I}$
    of $X$ and a constant $C_{m,\UU}$ such that 
    \begin{equation}\label{eq:5.3}
      \sup_{x,y\in U_i} m(x,y) \leq \Cmu\quad \text{for all } i\in I.
    \end{equation}
    
    Of course, the trivial weight $1$ has this property (provided of course 
that moderate admissible coverings exists), 
    so that unweighted $L^p(X)$-spaces are admitted. Moreover, if $w$ is a 
continuous weight on $X$, then property
    \eqref{eq:5.3} of its associated weight on $X\times X$ defined by (3.6) 
means that $w$ is $\UU$-moderate in the 
    terminology introduced by Feichtinger and Gr\"obner in \cite[Definition 
3.1]{fegr85}. 
    
    The next definition will be essential for the discretization problem.
    
    \begin{definition}\label{def:5.2}
      A frame $\mathcal F$ is said to possess property $D[\delta,m]$ if there 
exists a moderate admissible covering 
      $\UU = \UU^\delta = (U_i)_{i\in I}$ of $X$ {\cb and a phase function 
$\Gamma: X\times X \rightarrow \CC$ with
      $|\Gamma| = 1$} such that \eqref{eq:5.3} holds and such that the kernel 
{\cb $\oscUG$ defined by\footnote{See annotation 1.}
      \begin{equation*}
	\oscUG(x,y) := \sup_{z\in Q_y} |\langle 
S^{-1}\psi_x,\psi_y-\Gamma(y,z)\psi_z\rangle| = \sup_{z\in Q_y} 
|R(x,y) - \Gamma(y,z)R(x,z)|,
      \end{equation*}}
      where $Q_y := \bigcup_{i,y\in U_i} U_i$, satisfies
      \begin{equation}\label{eq:5.4}
	  {\cb\|\oscUG |\AAm\| < \delta.}
      \end{equation}
    \end{definition}
    
    We assume from now on that the frame $\mathcal F$ possesses at least 
property $D[\delta,1]$ for some $\delta>0$. 
    Furthermore, we only admit weight functions $m$ (resp. spaces $Y$) for 
which the frame has the property 
    $D[\delta,m]$ for some $\delta>0$. 
    
    \subsection{Preparations}
    
    Associated to a function space $Y$ and to a moderate admissible covering 
$\UU = (U_i)_{i\in I}$ we will 
    define two sequence spaces. Before being able to state their definition we 
have to make sure that characteristic 
    functions of compact sets are contained in $Y$.
    
    \begin{lemma}\label{lem:5.1}
      If $Q$ is an arbitrary compact subset of $X$ then the characteristic 
function of $Q$ is contained in $Y$.
    \end{lemma}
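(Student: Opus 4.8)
The plan is to reduce the claim to the defining axioms of the function space $Y$ together with the covering structure. Recall that $Y$ is a solid Banach function space on $X$ that is continuously embedded in $L^1_{loc}(X)$ (this is part of the standing hypotheses in Sections~1--4 of \cite{fora05}), and that $X$ is a locally compact, $\sigma$-compact space with a Radon measure $\mu$. First I would fix a compact set $Q\subseteq X$. Since $\UU=(U_i)_{i\in I}$ is an admissible covering with each $U_i$ relatively compact and having non-void interior, the interiors $(U_i^\circ)_{i\in I}$ form an open cover of $X$, hence of $Q$; by compactness finitely many of them, say $U_{i_1}^\circ,\dots,U_{i_n}^\circ$, already cover $Q$. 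Therefore $\chi_Q \leq \sum_{k=1}^n \chi_{U_{i_k}}$ pointwise.

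The next step is to show that each $\chi_{U_i}$ lies in $Y$, after which solidity of $Y$ and the triangle inequality finish the proof: $\chi_Q$ is dominated by a finite sum of elements of $Y$, so $\chi_Q\in Y$ with $\|\chi_Q\,|\,Y\| \leq \sum_{k=1}^n \|\chi_{U_{i_k}}\,|\,Y\|$. To see $\chi_{U_i}\in Y$, I would invoke the hypothesis that $Y$ contains the relevant "building block'' functions coming from the continuous frame --- concretely, in \cite{fora05} one knows that for each $x\in X$ the function $R(x,\cdot)$ (equivalently $y\mapsto \langle S^{-1}\psi_x,\psi_y\rangle$) lies in $Y$, or one uses the assumption that $Y$ is nontrivial and translation/localization arguments give a function in $Y$ that is bounded below on a neighborhood of any prescribed point. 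Pick $x_i$ in the interior of $U_i$; since $U_i$ is relatively compact and $R$ has suitable continuity, $|R(x_i,\cdot)|$ is bounded below by some $c_i>0$ on $U_i$ (shrinking $U_i$ is not allowed, so instead one uses a point whose associated kernel section is bounded below on all of the relatively compact $\overline{U_i}$, which is available from the local boundedness properties assumed on $R$). Then $\chi_{U_i} \leq c_i^{-1}|R(x_i,\cdot)|\,\chi_{\overline{U_i}} \leq c_i^{-1}|R(x_i,\cdot)|$, and solidity gives $\chi_{U_i}\in Y$.

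I expect the main obstacle to be the second step --- producing, for each $U_i$, an explicit element of $Y$ that dominates $\chi_{U_i}$. The subtlety is that we only have a lower bound on $\mu(U_i)$ and no upper bound, and the sets $U_i$ need not shrink, so one must be careful that the bound $|R(x_i,\cdot)|\geq c_i$ genuinely holds on the whole (relatively compact) set $U_i$ and not merely near $x_i$. This is where the continuity assumptions on the kernel $R$ from Sections~1--4 of \cite{fora05}, together with relative compactness of $U_i$, are essential: a continuous function that is positive at a point and continuous on a compact set is bounded below on any compact neighborhood, but to cover all of $U_i$ one genuinely uses that $\overline{U_i}$ is compact and that $R(x_i,\cdot)$ does not vanish there, which may require choosing $x_i$ appropriately or replacing $R(x_i,\cdot)$ by a finite sum $\sum_j |R(x_{i,j},\cdot)|$ over finitely many points $x_{i,j}$ whose kernel sections jointly stay bounded below on $\overline{U_i}$ by a compactness/partition-of-unity argument. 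Once that lower bound is secured, the rest is immediate from solidity and finiteness of the subcover.
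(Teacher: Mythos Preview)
Your argument has a genuine gap in the second step. You assert that $R(x_i,\cdot)\in Y$ for each $x_i$, but at this point in the paper this is \emph{not} available: the embedding $\mathcal K^1_v\subset\CoYt$ (which would give $W\psi_{x_i}=R(\cdot,x_i)\in Y$) is only proved later as Corollary~\ref{cor:5.9}, and its proof relies on the sequence spaces $\Yn$, whose very definition is what Lemma~\ref{lem:5.1} is meant to justify. So invoking $R(x_i,\cdot)\in Y$ here is circular. Moreover, even granting that membership, the lower bound $|R(x_i,y)|\geq c_i>0$ for all $y\in U_i$ is unjustified in general: nothing in the standing hypotheses prevents the reproducing kernel from vanishing on parts of $U_i$ (and if $\psi_y=0$ on a set of positive measure, no finite sum $\sum_j|R(x_{i,j},\cdot)|$ can be bounded below there). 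Your own hedging in the last paragraph does not resolve this; it merely restates the difficulty.

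The paper sidesteps both issues by never touching the specific kernel $R$. It uses only (i) that $Y$ is nontrivial and solid, and (ii) that every $L\in\AAm$ maps $Y$ into $Y$. Starting from any nonzero $F\in Y$ and any nonzero continuous positive $L\in\AAm$, the function $L(F)$ is continuous, positive somewhere, and lies in $Y$; hence there is \emph{one} compact set $U$ with nonempty interior and $\chi_U\leq C\,L(F)\in Y$. The passage from this single $U$ to an arbitrary compact $Q$ is then done by the rank-one ``transport'' kernel $K(x,y)=\mu(U)^{-1}\chi_Q(x)\chi_U(y)\in\AAm$, which satisfies $K(\chi_U)=\chi_Q$. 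This avoids any positivity or membership claims about $R$ and does not use the covering $\UU$ at all.
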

    \begin{proof}
      Assume that $F$ is a non-zero function in $Y$. Then by solidity we may 
assume that $F$ is positive. 
      Clearly, there exists a non-zero continuous positive 
kernel
%\footnote{Recall that we assume property $D[\delta,m]$ for some $\delta>0$.} 
      $L\in \AAm$. The application of $L$ to $F$ yields a non-zero positive 
    continuous function in $Y$ (by the assumption on 
$\AAm$)
    . Hence, there 
    exists a compact set $U$ with non-void interior such that $L(F)(x)>0$ for 
all $x\in U$. By compactness of
    $U$ and continuity of $L(F)$ there exists hence a constant $C$ such that 
$\chi_U(x) \leq CL(F)(x)$ for all 
    $x\in X$. By solidity $\chi_U$ is contained in $Y$. Now, we set $K(x,y) = 
\mu(U)^{-1}\chi_Q(x)\chi_U(y)$, which 
    clearly is an element of $\AAm$ by compactness of $Q$ and $U$. It holds 
$\chi_Q = K(\chi_U)$ and hence $\chi_Q\in Y$.
    \end{proof}
    
    Now we may define the spaces\footnote{In \cite{fora05}, the spaces and 
natural norms do not fit together. This is corrected by exchanging $\lambda_i$ 
for $|\lambda_i|$ in the sums. 
    See also \cite{raul11} for the correct definition.}
    {\cb\begin{align*}
      Y^\flat & := \Yf(\UU) := \{ (\lambda_i)_{i\in I},\ \|\sum_{i\in I} 
|\lambda_i|\chi_{U_i} | Y \| < \infty\},\\
      \Yn & := \Yn(\UU) := \{ (\lambda_i)_{i\in I},\ \|\sum_{i\in I} 
|\lambda_i|\mu(U_i)^{-1}\chi_{U_i} | Y \| < \infty\}
    \end{align*}}
    with natural norms
    \begin{align*}
      \| (\lambda_i)_{i\in I} | \Yf\| & := \|\sum_{i\in I} 
|\lambda_i|\chi_{U_i} | Y \|,\\
      \| (\lambda_i)_{i\in I} | \Yn\| & := \|\sum_{i\in I} 
|\lambda_i|\mu(U_i)^{-1}\chi_{U_i} | Y \|.
    \end{align*}
    
    If the numbers $\mu(U_i)$ are bounded from above (by assumption they are 
bounded from below) then the two
    sequence spaces coincide. Lemma \ref{lem:5.1} implies that the finite 
sequences are contained in $\Yf$ and $\Yn$.
    If the space $(Y,\|\cdot|Y\|)$ is a solid Banach function space, then 
$(\Yf,\|\cdot|\Yf\|)$ and $(\Yn,\|\cdot|\Yn\|)$
    are solid BK-spaces, i.e. solid Banach spaces of sequences for which 
convergence implies componentwise convergence (this can be 
    seen, for example, as a consequence of Theorem \ref{thm:5.2}(d) and the 
fact that $\Yf\subset \Yn$). Let us state some further properties of these
    spaces.
    
    \begin{theorem}\label{thm:5.2}
      \begin{itemize}
	\item[(a)] The spaces $(\Yf,\|\cdot|\Yf\|)$ and $(\Yn,\|\cdot|\Yn\|)$ 
are Banach spaces.
	\item[(b)] If the bounded functions with compact support are dense in 
$Y$, then the finite sequences
	are dense in $\Yf$ and $\Yn$.
	\item[(c)] Assume that $w$ is a weight function on $X$ such that its 
associated weight $m(x,y) =$\linebreak $\max\{w(x)/w(y),w(y)/w(x)\}$
	satisfies \eqref{eq:5.3}. For $Y = L^p_w(X,\mu)$, $1\leq p\leq \infty$, 
it holds $\Yf = \ell^p_{b_p}(I)$ and $\Yn = \ell^p_{d_p}(I)$ with equivalent 
norms with 
	\begin{equation*}
	  b_p(i) := \mu(U_i)^{1/p}\tilde{w}(i),\quad d_p(i) := 
\mu(U_i)^{1/p-1}\tilde{w}(i)
	\end{equation*}
	where $\tilde{w}(i) = \sup_{x\in U_i} w(x)$.
	\item[(d)] Suppose that \eqref{eq:5.3} holds for the weight function $m$ 
associated to $Y$ and denote\footnote{Recall $v(x) = v_z(x) = m(x,z)$ for some 
fixed $z\in X$. Also recall that $v_z$ and $v_y$ are equivalent weights for all 
$z,y\in X$.} $\tilde v(i) = \sup_{x\in U_i} v(x)$ and $r(i) = 
\tilde{v}(i)\mu(U_i)$. Then $\Yn$ is continuously embedded into 
$\ell^\infty_{1/r}(I)$.
      \end{itemize}
    \end{theorem}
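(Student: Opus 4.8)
\noindent\emph{Proof proposal.}
All four assertions concern the solid sequence spaces $\Yf,\Yn$ obtained by ``discretizing'' $Y$ along $\UU$, and the structural facts that carry the argument are just two: the bounded overlap \eqref{eq:5.1}, which makes every $\sum_i|\lambda_i|\chi_{U_i}$ a pointwise \emph{finite} sum of at most $N$ nonzero terms and lets one trade an $N$-fold overcounting of integrals over $X$ for sums over $I$; and the lower bound $\mu(U_i)\ge D$ (supplemented, for (c), by \eqref{eq:5.3}, which forces $\sup_{U_i}w\le\Cmu\,\inf_{U_i}w$). The plan is to prove \emph{(d)} first, since it is logically independent of (a)--(c) and, through $\Yf\subset\Yn$, supplies the componentwise-convergence (BK) property referred to before the statement.

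\smallskip
\noindent\emph{Part (a).} That $\|\cdot|\Yf\|$ and $\|\cdot|\Yn\|$ are norms is routine: homogeneity is clear, the triangle inequality comes from $\sum_i|\lambda_i+\nu_i|\chi_{U_i}\le\sum_i|\lambda_i|\chi_{U_i}+\sum_i|\nu_i|\chi_{U_i}$ together with solidity and the triangle inequality of $Y$, and definiteness from $\sum_i|\lambda_i|\chi_{U_i}\ge|\lambda_j|\chi_{U_j}$ and $\mu(U_j)\ge D>0$. For completeness, I would first note that $\mu(U_i)^{-1}\le D^{-1}$ gives $\|\cdot|\Yn\|\le D^{-1}\|\cdot|\Yf\|$ and that solidity yields $|\lambda_i|\,\mu(U_i)^{-1}\|\chi_{U_i}|Y\|\le\|\lambda|\Yn\|$ for each fixed $i$; hence a Cauchy sequence $(\lambda^{(n)})_n$ in $\Yf$ (or $\Yn$) is componentwise Cauchy, with componentwise limit $\lambda$. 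Fixing $n$ beyond the Cauchy threshold: for each finite $F\subset I$ the sums $\sum_{i\in F}|\lambda^{(n)}_i-\lambda^{(m)}_i|\chi_{U_i}$ converge, as $m\to\infty$, \emph{uniformly} and with supports in the fixed compact set $\bigcup_{i\in F}\overline{U_i}$ to $\sum_{i\in F}|\lambda^{(n)}_i-\lambda_i|\chi_{U_i}$, so Lemma~\ref{lem:5.1} and solidity give $\|\sum_{i\in F}|\lambda^{(n)}_i-\lambda_i|\chi_{U_i}|Y\|\le\varepsilon$; letting $F\uparrow I$, the Fatou property of $Y$ upgrades this to $\|\sum_i|\lambda^{(n)}_i-\lambda_i|\chi_{U_i}|Y\|\le\varepsilon$, whence $\lambda^{(n)}-\lambda$, and thus $\lambda$, lies in the space and $\lambda^{(n)}\to\lambda$.

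\smallskip
\noindent\emph{Parts (b) and (c).} For (b), given $\lambda\in\Yf$ set $f:=\sum_i|\lambda_i|\chi_{U_i}\in Y$; by density choose a bounded, compactly supported $g\in Y$ with $\|f-g|Y\|<\varepsilon$, and replace $g$ by $\min\{\max\{g,0\},f\}$, so that $0\le g\le f$ (still admissible and, by solidity, still $\varepsilon$-close). For an exhausting sequence of finite sets $F\uparrow I$ the tail $h_F:=\sum_{i\notin F}|\lambda_i|\chi_{U_i}$ satisfies $0\le h_F\le f\chi_{A_F}$, where $A_F:=\bigcup_{i\notin F}U_i$ decreases to $\emptyset$ by \eqref{eq:5.1}; splitting $h_F=h_F\wedge g+(h_F-g)^+$ with $\|(h_F-g)^+|Y\|\le\|f-g|Y\|<\varepsilon$ reduces the claim to $\|h_F\wedge g|Y\|\to0$, which holds because $h_F\wedge g$ is supported in the fixed compact set $\supp g$ and decreases pointwise to $0$ --- and this is exactly where the density hypothesis is used, through the absolute continuity of the $Y$-norm it entails. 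The argument for $\Yn$ is identical. For (c), with $Y=L^p_w$, $1\le p<\infty$ and $a_i\ge0$: the pointwise power--mean inequality together with \eqref{eq:5.1} gives $(\sum_i a_i\chi_{U_i}(x))^p\le N^{p-1}\sum_i a_i^p\chi_{U_i}(x)$, whence $\|\sum_i a_i\chi_{U_i}|L^p_w\|^p\le N^{p-1}\sum_i a_i^p\,\tilde w(i)^p\mu(U_i)$, while on each $U_j$ one has $\sum_i a_i\chi_{U_i}\ge a_j$, so summing over $j$ with the $N$-fold overcounting of \eqref{eq:5.1} and using \eqref{eq:5.3} yields $\|\sum_i a_i\chi_{U_i}|L^p_w\|^p\ge N^{-1}\Cmu^{-p}\sum_j a_j^p\,\tilde w(j)^p\mu(U_j)$. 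Taking $a_i=|\lambda_i|$ identifies $\Yf$ with $\ell^p_{b_p}$, and $a_i=|\lambda_i|\mu(U_i)^{-1}$ identifies $\Yn$ with $\ell^p_{d_p}$, both with equivalent norms; the case $p=\infty$ is analogous and easier.

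\smallskip
\noindent\emph{Part (d), and the main obstacle.} From $\sum_j|\lambda_j|\mu(U_j)^{-1}\chi_{U_j}\ge|\lambda_i|\mu(U_i)^{-1}\chi_{U_i}$ and solidity, $\|\lambda|\Yn\|\ge|\lambda_i|\,\mu(U_i)^{-1}\|\chi_{U_i}|Y\|$, so the embedding $\Yn\hookrightarrow\ell^\infty_{1/r}$ reduces to a \emph{uniform} lower bound $\|\chi_{U_i}|Y\|\ge c\,\tilde v(i)^{-1}$. I would obtain this from the local estimate $\int_Q|F|\,d\mu\le C(\int_Q v\,d\mu)\,\|F|Y\|$ for compact $Q\subset X$ and $F\in Y$ --- which follows, by testing the action of $\AAm$ on $Y$ against a rank-one kernel $\mu(U_0)^{-1}\chi_{U_0}(x)\chi_Q(y)$ in the spirit of the proof of Lemma~\ref{lem:5.1} and using the admissibility relation $m(x,y)\le C\,v(x)v(y)$ --- applied with $Q=U_i$, $F=\chi_{U_i}$: this gives $\mu(U_i)\le C(\int_{U_i}v)\|\chi_{U_i}|Y\|\le C\,\tilde v(i)\mu(U_i)\|\chi_{U_i}|Y\|$, i.e.\ $\|\chi_{U_i}|Y\|\ge(C\tilde v(i))^{-1}$, and hence $|\lambda_i|/r(i)=|\lambda_i|/(\tilde v(i)\mu(U_i))\le C\|\lambda|\Yn\|$ for every $i$. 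The two steps I expect to need genuine care are the final convergence in (b) --- where one must pin down exactly which consequence of ``bounded compactly supported functions are dense in $Y$'' is being used, namely absolute continuity of the $Y$-norm on sets of finite measure --- and, in (d), invoking the relationship between $Y$ and the weight $v$ recorded in Sections~1--4 of \cite{fora05} in precisely the form required; everything else is bookkeeping with \eqref{eq:5.1}, \eqref{eq:5.3} and $\mu(U_i)\ge D$.
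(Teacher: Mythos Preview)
Your proposal is essentially correct. Parts (a) and (b) are declared ``straightforward'' in the paper with no details, so your completeness argument via componentwise limits and the Fatou property, and your density argument via order continuity on compact supports, are reasonable ways to fill this in (the standing assumptions on $Y$ as a solid Banach function space justify the Fatou step). Part (c) follows the same route as the paper: the paper uses the cruder bound $(\sum_{j=1}^N|x_j|)^p\le N^p\sum|x_j|^p$ where you use the sharper $N^{p-1}$, but otherwise the two computations coincide.

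The genuine difference is in (d). The paper works with the \emph{same} rank-one kernel $K_i(x,y)=\chi_{U_k}(x)\chi_{U_i}(y)$ you invoke, but applies it directly to the full defining function $\sum_j|\lambda_j|\mu(U_j)^{-1}\chi_{U_j}$: one has $|\lambda_i|\chi_{U_k}\le K_i(\sum_j|\lambda_j|\mu(U_j)^{-1}\chi_{U_j})$, so solidity gives $|\lambda_i|\,\|\chi_{U_k}|Y\|\le\|K_i|\AAm\|\,\|\lambda|\Yn\|$, and the estimate $\|K_i|\AAm\|\le C'r(i)$ (obtained exactly as you do, via $m(x,y)\le m(x,y_0)m(y_0,y)$ and \eqref{eq:5.3}) finishes the argument in one stroke, since $\|\chi_{U_k}|Y\|$ is a single fixed constant. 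You instead split into two steps: first solidity alone yields $\|\lambda|\Yn\|\ge|\lambda_i|\mu(U_i)^{-1}\|\chi_{U_i}|Y\|$, and then you need the uniform lower bound $\|\chi_{U_i}|Y\|\ge c\,\tilde v(i)^{-1}$, which you extract from a local $L^1$-estimate via the same kernel acting in the other direction. Both are valid; the paper's route is a little more direct because it never has to control $\|\chi_{U_i}|Y\|$ from below, while yours isolates that lower bound as an independent fact. One small caveat: the local estimate you state, $\int_Q|F|\le C(\int_Q v)\|F|Y\|$, is not quite what the kernel argument produces --- the $\AAm$-norm of $\mu(U_0)^{-1}\chi_{U_0}\otimes\chi_Q$ picks up a $\sup_Q v$ term from the second Schur integral --- but for $Q=U_i$ this is harmless since \eqref{eq:5.3} and $\mu(U_i)\ge D$ make $\sup_{U_i}v$ and $\mu(U_i)^{-1}\int_{U_i}v$ comparable.
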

    \begin{proof}
      The statements (a) and (b) are straightforward to prove.
      
      {\cb 
	For (c) note that $\left(\sum_{j=1}^N |x_i|\right)^p/\left(\sum_{j=1}^N |x_i|^p\right)\leq N^p$ for arbitrary $x_i\in\CC$
	and use the finite overlap property of $\UU = (U_i)_{i\in I}$ to see that 
	\begin{align*}
	  \| \sum_{i\in I} |\lambda_i|\chi_{U_i} |L^p_w\|^p & = \int_X \Big|\sum_{i\in I}|\lambda_i|\chi_{U_i}(y)\Big|^p w^p(y)\dy \\
	  & \leq N^p\sum_{i\in I} |\lambda_i|^p \int_{U_i} w^p(y)\dy \leq N^p \|(\lambda_i)_{i\in I}|\ell^p_{b_p}\|^p.
	\end{align*}
        Alternatively, invoke Lemma \ref{lem:5.4} below to obtain an estimate with a possibly different constant\footnote{$\| \sum_{i\in I} |\lambda_i|\chi_{U_i} |L^p_w\|^p \leq \sum_{i\in I} \int_{U_i}
        (\lambda_i^+)^p w^p(y)\dy \leq \|(\mu(U_i)\lambda_i^+)_{i\in I}|\Yn\| \leq \tilde{C}\|(\tilde{\lambda}_i^+)_{i\in I}|\Yn\| \leq \tilde{C}C\|(\lambda_i)_{i\in I}|\Yf\|$, where $\tilde{C}$ is the moderateness constant from \eqref{eq:5.2} and $\tilde{\lambda}_i = \mu(U_i)\lambda_i$.}. For the other direction
        \begin{align*}
          \|(\lambda_i)_{i\in I}|\ell^p_{b_p}\|^p & = \sum_{i\in I} |\lambda_i|^p \mu(U_i) \tilde{w}^p(i) \leq \Cmu^p \sum_{i\in I} |\lambda_i|^p (\inf_{x\in U_i}w(x))^p \int_X \chi_{U_i}^p(y) \dy\\
          & \leq \Cmu^p \int_X \sum_{i\in I}  |\lambda_i|^p  \chi_{U_i}^p(y) w^p(y) \dy \\
          & \leq \Cmu^p \int_X \Big|\sum_{i\in I}  |\lambda_i| \chi_{U_i}(y)\Big|^p w^p(y) \dy = \Cmu^p \| \sum_{i\in I} |\lambda_i|\chi_{U_i} |L^p_w\|^p.
        \end{align*}
        Thus we have proven $(L^p_w)^\flat = \ell^p_{b_p}$. For $(L^p_w)^\natural = \ell^p_{d_p}$ just substitute $\mu(U_i)^{-1}\lambda_i$ for $\lambda_i$ in the derivations above.
      }
      
      For (d) we {\cb need to show that $|\lambda_i|\leq C r(i) 
\|(\lambda_j)_{j\in I}|\Yn\|$ for all $i\in I$.} 
      Fix some $k\in I$ and define the kernel 
      \begin{equation}\label{eq:5.5}
	K_i(x,y) = \chi_{U_k}(x)\chi_{U_i}(y),\ i\in I.
      \end{equation}
      For any $i\in I$ we obtain
      \begin{equation*}
	|\lambda_i|\chi_{U_k} = K_i(|\lambda_i|\mu(U_i)^{-1} \chi_{U_i}) \leq 
K_i(\sum_{j\in I}|\lambda_j|\mu(U_j)^{-1} \chi_{U_j}).
      \end{equation*}
      By solidity of $Y$ we get
      \begin{align*}
	|\lambda_i|\|\chi_{U_k}|Y\| & \leq \|K_i(\sum_{j\in 
I}|\lambda_j|\mu(U_j)^{-1} \chi_{U_j})|Y\| \leq \|K_i|\AAm\|\|\sum_{j\in 
I}|\lambda_j|\mu(U_j)^{-1} \chi_{U_j}|Y\|\\
	& = \|K_i|\AAm\|\|(\lambda_j)_{j\in I}|\Yn\|.
      \end{align*}
      {\cb Since $k$ is fixed, it remains to show $\|K_i|\AAm\|\leq 
C r(i)$ to complete the proof.} Let us estimate the $\AAm$-norm of $K_i$. With 
$y_0\in U_k$ we obtain
      \begin{align*}
	\int_X |K_i(x,y)|m(x,y)\dy & \leq \chi_{U_k}(x) \int_{U_i} m(x,y)\dy 
\leq \mu(U_i) \sup_{x\in U_k}\sup_{y\in U_i} m(x,y)\\
	& \leq \mu(U_i) \sup_{y\in U_i} m(y_0,y) \sup_{x\in U_k} m(x,y_0) \leq 
C\Cmu \mu(U_i)\tilde{v}(i) = {\cb C\Cmu r(i)},
      \end{align*}
      {\cb where we used that $m$ is admissible and} that different choices of 
$z$ in the definition (3.7) of $v$ yield equivalent weights. Furthermore a 
similar computation yields
      \begin{equation*}
        \begin{split}
	{\cb \int_X |K_i(x,y)|m(x,y)\dx~} & {\cb\leq \chi_{U_i}(y)\int_{U_k} m(x,y)\dx} \leq C \Cmu \mu(U_k)\tilde{v}(i)\\
& \leq C \Cmu D^{-1}\mu(U_k)\mu(U_i)\tilde{v}(i)
        \end{split}
      \end{equation*}
      where $D$ is the constant in Definition \ref{def:5.1} of a moderate 
admissible covering {\cb and has been added to treat the case $\mu(U_i) < 1$}. 
Hence, 
      $\|K_i|\AAm\| \leq C' r(i)$ for some suitable constant $C'$ (note that 
$k$ is fixed). This proves the claim.       
    \end{proof}
    
    Let us investigate the dependence of the spaces $\Yf$ and $\Yn$ on the 
particular covering chosen.

    \begin{definition}\label{def:5.3}
      Suppose $\UU = (U_i)_{i\in I}$ and $\mathcal V = (V_i)_{i\in I}$ are two 
moderate admissible coverings of $X$ over the same index set $I$. 
      Assume that $m$ is a weight function on $X\times X$. The coverings $\UU$ 
and $\mathcal V$ are called $m$-equivalent if the following conditions are 
satisfied.
      \begin{itemize}
	\item[(i)] There are constants $C_1,C_2 >0$ such that $C_1\mu(U_i) \leq 
\mu(V_i) \leq C_2\mu(U_i)$ for all $i\in I$.
	\item[(ii)] There exists a constant $C'$ such that $\sup_{x\in 
U_i}\sup_{y\in V_i} m(x,y)\leq C'$ for all $i\in I$.
      \end{itemize}
    \end{definition}
    
    \begin{lemma}\label{lem:5.3}
      Let $m$ be the weight function associated to $Y$ and suppose that $\UU = 
(U_i)_{i\in I}$ and $\mathcal V = (V_i)_{i\in I}$ are $m$-equivalent 
      moderate admissible coverings over the same index set $I$. Then it holds 
$\Yf(\UU) = \Yf(\mathcal V)$ and $\Yn(\UU) = \Yn(\mathcal V)$ with equivalence 
of norms.
    \end{lemma}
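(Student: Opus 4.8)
The plan is to deduce both equalities from the solidity of $Y$ by exhibiting, for each direction of inclusion, a single kernel in $\AAm$ whose action dominates the passage from one of the two defining sums to the other --- exactly the mechanism already used in Lemma~\ref{lem:5.1} and in the proof of Theorem~\ref{thm:5.2}(d).

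For $\Yf$, to obtain the inclusion $\Yf(\mathcal V)\hookrightarrow\Yf(\UU)$ I would take
\[
  K(x,y):=\sum_{i\in I}\mu(V_i)^{-1}\chi_{U_i}(x)\,\chi_{V_i}(y),
\]
which is a well-defined nonnegative kernel since, by the finite overlap property \eqref{eq:5.1} of both $\UU$ and $\mathcal V$, the sum is locally finite. Applying $K$ to $\sum_{j\in I}|\lambda_j|\chi_{V_j}$ and discarding all but the diagonal terms $j=i$ (which are nonnegative) gives the pointwise bound $K\big(\sum_{j}|\lambda_j|\chi_{V_j}\big)\geq\sum_{i}|\lambda_i|\chi_{U_i}$, so solidity of $Y$ yields
\[
  \big\|(\lambda_i)_{i\in I}\,\big|\,\Yf(\UU)\big\|\ \leq\ \|K\,|\,\AAm\|\cdot\big\|(\lambda_i)_{i\in I}\,\big|\,\Yf(\mathcal V)\big\|,
\]
provided $K\in\AAm$.

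Establishing $\|K|\AAm\|<\infty$ is the heart of the matter and is precisely where the two conditions of Definition~\ref{def:5.3} are used. In the integral $\int_X|K(x,y)|m(x,y)\dy$ only the at most $N$ indices $i$ with $x\in U_i$ contribute, and condition~(ii) gives $\int_{V_i}m(x,y)\dy\leq C'\mu(V_i)$, which the prefactor $\mu(V_i)^{-1}$ cancels, leaving a bound $NC'$. In $\int_X|K(x,y)|m(x,y)\dx$ only the at most $N$ indices $i$ with $y\in V_i$ contribute; symmetry of the weight $m$ lets us again invoke condition~(ii) to get $\int_{U_i}m(x,y)\dx\leq C'\mu(U_i)$, and condition~(i) in the form $\mu(U_i)\leq C_1^{-1}\mu(V_i)$ absorbs the leftover $\mu(V_i)^{-1}$, giving a bound $NC'C_1^{-1}$. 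Hence $K\in\AAm$. Exchanging the roles of $\UU$ and $\mathcal V$ (the hypotheses of Definition~\ref{def:5.3} are symmetric in the two coverings once symmetry of $m$ is taken into account) produces the reverse inequality, so $\Yf(\UU)=\Yf(\mathcal V)$ with equivalent norms.

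For $\Yn$ the same scheme applies verbatim with $K$ replaced by $\tilde K(x,y):=\sum_{i\in I}\mu(U_i)^{-1}\chi_{U_i}(x)\chi_{V_i}(y)$: the diagonal terms now turn $\sum_j|\lambda_j|\mu(V_j)^{-1}\chi_{V_j}$ into a lower bound $\sum_i|\lambda_i|\mu(U_i)^{-1}\chi_{U_i}$, and the two $\AAm$-integrals of $\tilde K$ are controlled, respectively, by $NC'C_2$ (using $\mu(V_i)\leq C_2\mu(U_i)$ from condition~(i)) and by $NC'$. I do not anticipate a genuine obstacle beyond bookkeeping of the constants; the one place asking for a little care is the appeal to symmetry (admissibility) of $m$ needed to bound $\int_{U_i}m(x,y)\dx$ via condition~(ii), which is stated for $x\in U_i$, $y\in V_i$.
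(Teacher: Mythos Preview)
Your proof is correct and follows essentially the same approach as the paper: your kernel $K$ is identical to the paper's kernel $L(x,y)=\sum_{j}\mu(V_j)^{-1}\chi_{U_j}(x)\chi_{V_j}(y)$, and your $\AAm$-estimates use conditions~(i) and~(ii) in exactly the same way. The only cosmetic difference is that for $\Yn$ the paper simply substitutes $(\mu(U_i)^{-1}\lambda_i)_{i\in I}$ for $(\lambda_i)_{i\in I}$ in the already-proven $\Yf$ estimate (absorbing the resulting factor $\mu(U_i)^{-1}\mu(V_i)$ via condition~(i)), rather than introducing a second kernel $\tilde K$; and your aside about needing symmetry of $m$ for the bound $\int_{U_i}m(x,y)\dx\leq C'\mu(U_i)$ is unnecessary (condition~(ii) already covers $x\in U_i$, $y\in V_i$), though symmetry is indeed what makes the role-swap for the reverse inequality legitimate.
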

    \begin{proof}
      Assume that $(\lambda_i)_{i\in I}$ is contained in $\Yf(\mathcal V)$. 
Observe that the term 
      \begin{equation*}
	\int_X \chi_{V_i}(y)\chi_{V_j}(y)\dy \mu(V_j)^{-1}
      \end{equation*}
      equals $1$ for $i=j$ and for fixed $i$ it is non-zero for at most $N$ 
different indices $j$ by the finite overlap property \eqref{eq:5.1}. We obtain
      \begin{align*}
	\lefteqn{\sum_{i\in I} |\lambda_i|\chi_{U_i}(x) \leq \sum_{i\in I} 
|\lambda_i| \sum_{j\in I} \chi_{U_j}(x) \int_X \chi_{V_i}(y)\chi_{V_j}(y)\dy 
\mu(V_j)^{-1}}\\
	&= \int_X \sum_{i\in I} |\lambda_i| \chi_{V_i}(y) \sum_{j\in I} 
\chi_{U_j}(x)\chi_{V_j}(y)\mu(V_j)^{-1}\dy = L(\sum_{i\in I} |\lambda_i| 
\chi_{V_i})(x),
      \end{align*}
      where the kernel $L$ is defined by 
      \begin{equation}\label{eq:5.6}
	L(x,y):= \sum_{j\in I} \chi_{U_j}(x)\chi_{V_j}(y)\mu(V_j)^{-1}.
      \end{equation}
      The interchange of summation and integration is always allowed since by 
the finite overlap property the sum is always finite for fixed $x,y$. We claim 
that $L$ is contained in $\AAm$. Using property 
      {\cb (ii)} of $m$-equivalent coverings and once more the finite overlap 
property, we get
      \begin{align*}
	\int_X L(x,y)m(x,y)\dy & = \sum_{j\in I} \chi_{U_j}(x) \int_X 
\chi_{V_j}(y)\mu(V_j)^{-1}m(x,y)\dy \\
	&{\cb \overset{(\textrm{(ii)}}{\leq}} C'\sum_{j\in I} \chi_{U_j}(x)\leq C'N\quad 
\text{for all } x\in X.
      \end{align*}
      With property (i) and (ii) in Definition \ref{def:5.3} we get
      \begin{align*}
	\lefteqn{\int_X L(x,y)m(x,y)\dx = \sum_{j\in I} 
\chi_{V_j}(y)\mu(V_j)^{-1} \int_X \chi_{U_j}(x)m(x,y)\dx} \\
	&{\cb \overset{\textrm{(ii)}}{\leq}} C'\sum_{j\in I} 
\chi_{V_j}(y)\mu(V_j)^{-1}\mu(U_j) {\cb \overset{\textrm{(i)}}{\leq}} C'C_1N\quad 
\text{for all } y\in X.
      \end{align*}
      Thus, $L\in \AAm$ and by solidity of $Y$ we conclude that 
      \begin{equation*}
	\|(\lambda_i)_{i\in I}|\Yf(\UU)\| \leq \|L(\sum_{i\in I} 
|\lambda_i|\chi_{V_i}|Y\| \leq \|L|\AAm\|\|(\lambda_i)_{i\in I}|\Yf(\mathcal 
V)\|.
      \end{equation*}
      Exchanging the roles of $\UU$ and $\mathcal V$ gives a reversed 
inequality and thus $\Yf(\UU) = \Yf(\mathcal V)$. Moreover, replacing
      $(\lambda_i)_{i\in I}$ by $(\mu(U_i)^{-1}\lambda_i)_{i\in I}$ shows that 
$\Yn(\UU) = \Yn(\mathcal V)$.
    \end{proof}
    
    For some $i\in I$ we denote $i^\ast := \{j\in I, U_i\cap U_j\neq 
\emptyset\}$. Clearly, this is a finite set with at most $N$ elements. The next 
Lemma states that the 
    sequence spaces $\Yn$ are $\UU$-regular in the sense of \cite[Definition 
2.5]{fegr85}.
    
    \begin{lemma}\label{lem:5.4}
      For $(\lambda_i)_{i\in I} \in \Yn$ let $\lambda_i^+ := \sum_{j\in i^\ast} 
\lambda_j$. Then there exists some constant $C>0$ 
      such that $\|(\lambda_i^+)_{i\in I}|\Yn\|\leq C\|(\lambda_i)_{i\in 
I}|\Yn\|$.
    \end{lemma}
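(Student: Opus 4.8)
The plan is to estimate, pointwise, the function $\sum_{i\in I}|\lambda_i^+|\mu(U_i)^{-1}\chi_{U_i}$ defining the $\Yn$-norm of $(\lambda_i^+)_{i\in I}$ by $K(G)$, where $G:=\sum_{j\in I}|\lambda_j|\mu(U_j)^{-1}\chi_{U_j}$ is the function defining the $\Yn$-norm of $(\lambda_i)_{i\in I}$ and $K$ is a suitable kernel in $\AAm$. Once this is done, solidity of $Y$ and the boundedness of elements of $\AAm$ on $Y$ give $\|(\lambda_i^+)_{i\in I}|\Yn\|\le\|K(G)|Y\|\le\|K|\AAm\|\,\|G|Y\| = \|K|\AAm\|\,\|(\lambda_i)_{i\in I}|\Yn\|$, which is the assertion with $C:=\|K|\AAm\|$. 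This follows the same kernel trick already used in the proof of Lemma~\ref{lem:5.3}.

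To build $K$, first use the triangle inequality $|\lambda_i^+|\le\sum_{k\in i^\ast}|\lambda_k|$, and for each $k$ the trivial bound $|\lambda_k| = \mu(U_k)^{-1}\int_X|\lambda_k|\chi_{U_k}(y)\dy\le\int_X G(y)\chi_{U_k}(y)\dy$. Inserting this and interchanging the sums with the integral (for fixed $x$ only finitely many terms survive, by the finite overlap property \eqref{eq:5.1} and $\#i^\ast\le N$, so this is harmless even before one knows the individual integrals are finite), one obtains
\[
  \sum_{i\in I}|\lambda_i^+|\mu(U_i)^{-1}\chi_{U_i}(x)\le\int_X G(y)\,K(x,y)\dy = K(G)(x),\qquad K(x,y):=\sum_{i\in I}\sum_{k\in i^\ast}\mu(U_i)^{-1}\chi_{U_i}(x)\chi_{U_k}(y).
\]

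The real work, which I expect to be the main obstacle, is checking $K\in\AAm$; this is where all three structural properties of a moderate admissible covering must be brought in at the right spot. The recurring sub-estimate is that if $U_i\cap U_k\ne\emptyset$, then, choosing $z\in U_i\cap U_k$ and using submultiplicativity of the admissible weight $m$ together with \eqref{eq:5.3}, one has $\sup_{x\in U_i,\,y\in U_k}m(x,y)\le\sup_{x\in U_i}m(x,z)\cdot\sup_{y\in U_k}m(z,y)\le\Cmu^2$, while \eqref{eq:5.2} yields $\mu(U_k)\le\widetilde{C}\mu(U_i)$. With these in hand one estimates
\[
  \int_X K(x,y)m(x,y)\dy\le\sum_{i\in I}\mu(U_i)^{-1}\chi_{U_i}(x)\sum_{k\in i^\ast}\Cmu^2\mu(U_k)\le\Cmu^2\widetilde{C}N\sum_{i\in I}\chi_{U_i}(x)\le\Cmu^2\widetilde{C}N^2
\]
(using $\#i^\ast\le N$, then finite overlap), and symmetrically, exploiting that $k\in i^\ast$ is equivalent to $i\in k^\ast$,
\[
  \int_X K(x,y)m(x,y)\dx\le\sum_{i\in I}\sum_{k\in i^\ast}\mu(U_i)^{-1}\chi_{U_k}(y)\,\Cmu^2\mu(U_i) = \Cmu^2\sum_{k\in I}(\#k^\ast)\,\chi_{U_k}(y)\le\Cmu^2N^2.
\]
Hence $\|K|\AAm\|\le\Cmu^2\widetilde{C}N^2<\infty$, so the constant $C:=\Cmu^2\widetilde{C}N^2$ does the job.

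Finishing off is then routine: since $G\in Y$ (this is the hypothesis $(\lambda_i)_{i\in I}\in\Yn$) and $K\in\AAm$, we have $K(G)\in Y$ with $\|K(G)|Y\|\le\|K|\AAm\|\,\|G|Y\|$, so $K(G)$ is finite a.e.; combining this with the pointwise bound above and solidity of $Y$ gives $\|(\lambda_i^+)_{i\in I}|\Yn\| = \|\sum_{i\in I}|\lambda_i^+|\mu(U_i)^{-1}\chi_{U_i} | Y\|\le\|K(G)|Y\|\le C\,\|(\lambda_i)_{i\in I}|\Yn\|$.
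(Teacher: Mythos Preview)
Your proof is correct and takes a genuinely different route from the paper's. The paper invokes Proposition~3.1 of \cite{fegr85} to reduce the claim to showing that every permutation $\pi:I\to I$ with $\pi(i)\in i^\ast$ acts boundedly on $\Yn$ (with a uniform constant), and then constructs a separate kernel $K_\pi$ for each such permutation. You instead bypass the external reference entirely: by bounding $|\lambda_k|\le\int_X G(y)\chi_{U_k}(y)\,d\mu(y)$ you absorb the whole map $(\lambda_i)\mapsto(\lambda_i^+)$ into a \emph{single} kernel $K(x,y)=\sum_i\sum_{k\in i^\ast}\mu(U_i)^{-1}\chi_{U_i}(x)\chi_{U_k}(y)$ and estimate its $\AAm$-norm directly. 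Both arguments ultimately rely on the same three structural ingredients---finite overlap \eqref{eq:5.1}, moderateness \eqref{eq:5.2}, and the weight bound \eqref{eq:5.3}---and both yield constants of the same order. Your version is more self-contained (no appeal to \cite{fegr85}) and arguably cleaner, at the modest cost of an extra factor $N$ in the constant compared to the individual $K_\pi$ estimates; the paper's detour through permutations, on the other hand, makes the $\UU$-regularity of $\Yn$ in the sense of \cite{fegr85} explicit, which may be of independent interest.
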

    \begin{proof}
      By Proposition 3.1 in \cite{fegr85} we have to prove that any permutation 
$\pi~:~I\rightarrow I$ satisfying {\cb $\pi(i)\in i^\ast$} for all $i\in I$ 
induces a 
      bounded operator on $\Yn$, i.e. $\|(\lambda_{\pi(i)})_{i\in I}|\Yn\|\leq 
C'\|(\lambda_{i})_{i\in I}|\Yn\|$, {\cb for some $C'$ independent of $\pi$}.
      We define the kernel 
      \begin{equation*}
	K_{\pi}(x,y):= \sum_{i\in I} 
\mu(U_{\pi^{-1}(i)})^{-1}\chi_{U_{\pi^{-1}(i)}}(x)\chi_{U_i}(y).
      \end{equation*}
      It is easy to see that
      \begin{align*}
	K_{\pi}(\mu(U_{j})^{-1}\chi_{U_{j}})(x) & {\cb = 
\mu(U_{\pi^{-1}(j)})^{-1}\chi_{U_{\pi^{-1}(j)}}
	+ \sum_{i\in I\setminus\{j\}} 
\mu(U_{\pi^{-1}(i)})^{-1}\chi_{U_{\pi^{-1}(i)}}(x)\int_X 
\chi_{U_i}(y)\chi_{U_j}(y)\dy}\\
	& \geq \mu(U_{\pi^{-1}(j)})^{-1}\chi_{U_{\pi^{-1}(j)}}.
      \end{align*}
      This gives 
      \begin{align*}
	\sum_{i\in I} |\lambda_{\pi(i)}|{\cb \mu(U_i)^{-1}}\chi_{U_i}(x) & = 
\sum_{i\in I} |\lambda_{i}|\mu(U_{\pi^{-1}(i)})^{-1}\chi_{U_{\pi^{-1}(i)}}(x) \leq 
\sum_{i\in I} |\lambda_{i}|K_{\pi}(\mu(U_{i})^{-1}\chi_{U_{i}})(x)\\
	& = K_{\pi}(\sum_{i\in I} |\lambda_{i}|\mu(U_{i})^{-1}\chi_{U_{i}})(x).
      \end{align*}
      Provided $K_\pi$ is contained in $\AAm$ this would give the result by 
solidity of $Y$. So let us estimate the $\AAm$-norm of $K_\pi$. We have
      \begin{align*}
	\int_X K_\pi(x,y)m(x,y)\dx & = \int_X \sum_{i\in I} 
\mu(U_{\pi^{-1}(i)})^{-1}\chi_{U_{\pi^{-1}(i)}}(x)\chi_{U_i}(y)m(x,y)\dx\\
	& \leq \left(\sum_{i\in I} \chi_{U_i}(y)\right) \sup_{i\in I}\sup_{y\in 
U_i}\sup_{x\in \cup_{j\in i^\ast} U_j} m(x,y)\leq \Cmu^2 N.
      \end{align*}
      Hereby, we used {\cb the finite overlap property and} that for $y\in 
U_i,x\in U_j$ with $U_i\cap U_j\neq \emptyset$ and $z\in U_i\cap U_j$ it holds 
$m(x,y)\leq m(x,z)m(z,y)\leq \Cmu^2$ 
      by property \eqref{eq:5.3}. Furthermore by property \eqref{eq:5.2}, we 
obtain 
      \begin{align*}
	\int_X K_\pi(x,y)m(x,y)\dy & {\cb =} \int_X \sum_{i\in I} 
\mu(U_{\pi^{-1}(i)})^{-1}\chi_{U_{\pi^{-1}(i)}}(x)\chi_{U_i}(y)m(x,y)\dy\\
	& \leq \Cmu^2 \sum_{i\in I} 
\mu(U_{\pi^{-1}(i)})^{-1}\mu(U_i)\chi_{U_{\pi^{-1}(i)}}({\cb x}) 
\overset{\eqref{eq:5.2}}{\leq} \Cmu^2\widetilde{C} N.
      \end{align*}
      This completes the proof.
    \end{proof}
    
    We will further need a partition of unity (PU) associated to a moderate 
admissible covering of $X$, i.e. a family $\Phi = (\phi_i)_{i\in I}$ of 
measurable functions that satisfies 
    $0\leq \phi_i(x)\leq 1$ for all $x\in X$, $\supp(\phi_i)\subset U_i$ and 
$\sum_{i\in I} \phi_i(x) = 1$ for all $x\in X$. The construction of such a 
family $\Phi$ subordinate to a locally finite 
    covering of some topological space is standard, see also 
\cite[pp. 127-{\cb 128, Proposition 4.41}]{fo84}.

    We may apply a kernel $K$ also to a measure $\nu$ on $X$ by means of 
    \begin{equation*}
      K(\nu)(x) = \int_X K(x,y) d\nu(y).
    \end{equation*}
    We define the following space of measures\footnote{Our definition is 
formally slightly different, but clearly equivalent to the one in \cite{fora05}.},
    \begin{equation*}
      D(\UU,M,\Yn) := \{\nu \in M_{loc}(X),~ \|(|\nu|(U_i))_{i\in I}|\Yn\| < \infty\}
    \end{equation*}
    with norm 
    \begin{equation*}
      \|\nu|D(\UU,M,\Yn)\| := \|(|\nu|(U_i))_{i\in I}|\Yn\|,
    \end{equation*}
    where $M_{loc}$ denotes the space of complex Radon measures. Spaces of this 
kind were introduced by Feichtinger and Gr\"obner in \cite{fegr85} who called 
them decomposition spaces.
    We identify a function with a measure in the usual way. Then 
    \begin{equation*}
      D(\UU,L^1,\Yn) := \{F \in L^1_{loc},~ \|(\int_{U_i}|F(x)|\dx)_{i\in 
I}|\Yn\| < \infty\}
    \end{equation*}
    with norm $\|F|D(\UU,L^1,\Yn)\| := \|(\|\chi_{U_i} F|L^1\|)_{i\in I}|\Yn\|$ 
can be considered as a closed subspace of $D(\UU,M,\Yn)$.
    
    We have the following auxiliary result.
    
    \begin{lemma}\label{lem:5.5}
      \begin{itemize}
	\item[(a)] It holds $Y\subset D(\UU,L^1,(L^\infty_{1/v})^\natural)$ with 
continuous embedding.
	\item[(b)] Assume that the frame $\mathcal F$ has property $D[\delta,m]$ 
for some $\delta>0$ {\cb and let $\UU^\delta = (U_i)_{i\in I}$ be a corresponding\footnote{i.e. 
there is a $\Gamma:X\times X\rightarrow \CC$ with $|\Gamma|=1$ such that $\|\oscUGd|\AAm\|<\delta$.} moderate admissible covering of $X$}. Then for $\nu\in D(\UU^\delta,M,\Yn)$ it holds 
	$R(\nu)\in Y$ and $\|R(\nu)|Y\|\leq C\|\nu|D(\UU^\delta,M,\Yn)\|$.        
      \end{itemize}
    \end{lemma}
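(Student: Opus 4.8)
The plan is to treat the two parts separately, part (b) being the substantial one.

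For (a), fix once and for all some $k\in I$ and, for $i\in I$, the kernel $K_i(x,y)=\chi_{U_k}(x)\chi_{U_i}(y)$ from \eqref{eq:5.5}. Given $F\in Y$ we may assume $F\ge 0$ by solidity; then $K_i(F)=\big(\int_{U_i}F\dy\big)\chi_{U_k}$, so $\big(\int_{U_i}F\dy\big)\,\|\chi_{U_k}|Y\|=\|K_i(F)|Y\|\le\|K_i|\AAm\|\,\|F|Y\|$, and the computation already carried out in the proof of Theorem \ref{thm:5.2}(d) shows $\|K_i|\AAm\|\le C' r(i)$ with $r(i)=\tilde v(i)\mu(U_i)$. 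Since $\|\chi_{U_k}|Y\|>0$ is a fixed constant by Lemma \ref{lem:5.1}, this gives $\int_{U_i}|F|\dy\le C\,r(i)\,\|F|Y\|$ for all $i$. Finally, Theorem \ref{thm:5.2}(c) applies to $L^\infty_{1/v}$, because its associated weight $m'(x,y)=\max\{v(y)/v(x),v(x)/v(y)\}$ satisfies $\sup_{x,y\in U_i}m'(x,y)\le\Cmu$ in view of $v(x)/v(y)\le m(x,y)\le\Cmu$ for $x,y\in U_i$; hence $(L^\infty_{1/v})^\natural=\ell^\infty_{d_\infty}$ with $d_\infty(i)=\mu(U_i)^{-1}/\inf_{U_i}v\le\Cmu/r(i)$, and combining the two estimates yields $\|F|D(\UU,L^1,(L^\infty_{1/v})^\natural)\|\le C\Cmu\,\|F|Y\|$.

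For (b), pick points $x_i\in U_i$ and a partition of unity $\Phi=(\phi_i)_{i\in I}$ subordinate to $\UU^\delta$, and set $\lambda_i:=|\nu|(U_i)$, $g:=\sum_i\lambda_i\mu(U_i)^{-1}\chi_{U_i}$, so that $\|g|Y\|=\|\nu|D(\UU^\delta,M,\Yn)\|$. The key elementary fact is that for $y\in U_i$ one has $x_i\in U_i\subset Q_y$, hence $\big||R(x,y)|-|R(x,x_i)|\big|\le|R(x,y)-\Gamma(y,x_i)R(x,x_i)|\le\oscUGd(x,y)$. Writing $R(\nu)=\sum_i R(\phi_i\nu)$ (the sums below are finite for fixed $x$ by the overlap property) and using $\sum_i\phi_i\equiv1$, we obtain for a.e. $x$
\begin{equation*}
  |R(\nu)(x)|\le\int_X|R(x,y)|\,d|\nu|(y)\le\sum_i|R(x,x_i)|\,\lambda_i+\int_X\oscUGd(x,y)\,d|\nu|(y).
\end{equation*}
Both terms are dominated by a kernel operator acting on $g$: since $\lambda_j\ge0$ and $\mu(U_j)^{-1}\int_X\chi_{U_i}\chi_{U_j}\dy=1$ when $i=j$, one checks $\sum_i|R(x,x_i)|\lambda_i\le K_1(g)(x)$ with $K_1(x,y):=\sum_i|R(x,x_i)|\chi_{U_i}(y)$, and similarly $\int_X\oscUGd(x,y)\,d|\nu|(y)\le K_2(g)(x)$ for a kernel $K_2$ assembled from values of $|R|$ and $\oscUGd$ attached to the sets $U_i$ and to their finite neighbourhoods $U_i^\ast:=\bigcup_{j\in i^\ast}U_j$. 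By solidity of $Y$ it then suffices to prove $K_1,K_2\in\AAm$, for then $\|R(\nu)|Y\|\le(\|K_1|\AAm\|+\|K_2|\AAm\|)\,\|g|Y\|$.

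The argument for $K_1\in\AAm$ is the template: from \eqref{eq:5.3} one has $\int_{U_i}m(x,y)\dy\le\Cmu\,m(x,x_i)\mu(U_i)$ and $m(x,x_i)\le\Cmu\,m(x,y)$ for $y\in U_i$; converting the sampled weight $|R(x,x_i)|\mu(U_i)$ into $\int_{U_i}(|R(x,y)|+\oscUGd(x,y))\dy$ via the elementary fact, and then invoking the finite overlap \eqref{eq:5.1}, one bounds $\int_X K_1(x,y)m(x,y)\dy\le\Cmu^2 N(\|R|\AAm\|+\delta)$, while $\int_X K_1(x,y)m(x,y)\dx\le\Cmu N\|R|\AAm\|$ is immediate because there the sample point $x_i$ does not depend on the integration variable and $R\in\AAm$ applies directly. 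I expect the genuine obstacle to be the analogous estimate for $K_2$. Here one first dominates, for $y\in U_i$, $\oscUGd(x,y)\le|R(x,x_i)|+\oscUGd(x,x_i)+\sum_{j\in i^\ast}\big(|R(x,x_j)|+\oscUGd(x,x_j)\big)$ using $Q_y\subset U_i^\ast$ and the elementary fact at the points $x_j$ — so $K_2$ becomes a kernel built purely from sampled values — and then one feeds this through the same three structural inputs \eqref{eq:5.1}, \eqref{eq:5.2}, \eqref{eq:5.3} together with $R,\oscUGd\in\AAm$. The $\dx$-direction closes cleanly (everything is at fixed $x_j$), but the $\dy$-direction produces, after reindexing, a term $\sum_j\oscUGd(x,x_j)m(x,x_j)\mu(U_j)$, i.e. a Riemann-type sum of $\oscUGd(x,\cdot)m(x,\cdot)$ over the sample points, which is not automatically controlled by $\int_X\oscUGd(x,y)m(x,y)\dy\le\delta$ because $\oscUGd$ is itself defined through a supremum over $Q_y$ and a pointwise comparison $\oscUGd(x,x_j)\le C\,\oscUGd(x,y)$ for $y\in U_j$ need not hold. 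This is the step that must be arranged so as to close; one clean way is to enlarge the auxiliary kernel in Definition \ref{def:5.2} to measure oscillation over a double layer $\bigcup_{j\in i^\ast}U_j$ of the covering, for which the required comparison does become available. Granting $K_1,K_2\in\AAm$, the stated bound $\|R(\nu)|Y\|\le C\,\|\nu|D(\UU^\delta,M,\Yn)\|$ follows.
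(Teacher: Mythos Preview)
Your argument for part (a) is fine and essentially matches the paper's: both use the kernel $K_i$ of \eqref{eq:5.5} and the $\AAm$-bound $\|K_i|\AAm\|\le C'\,r(i)$ established in Theorem~\ref{thm:5.2}(d); you package the conclusion via Theorem~\ref{thm:5.2}(c), while the paper verifies $H\in L^\infty_{1/v}$ by a direct pointwise estimate.

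For part (b) your route diverges from the paper's and contains a genuine gap. By splitting $|R(x,y)|\le |R(x,x_i)|+\oscUGd(x,y)$ against chosen sample points $x_i$, you create the term $\int_X\oscUGd(x,y)\,d|\nu|(y)$, which you then try to dominate by a kernel $K_2$ acting on $g$. As you yourself diagnose, controlling $K_2$ in $\AAm$ requires a pointwise comparison of $\oscUGd(x,\cdot)$ at different points of $U_i$ that Definition~\ref{def:5.2} does not provide; proposing to enlarge that definition to a ``double layer'' is not a proof of the lemma under the stated hypotheses.

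The paper avoids this detour entirely and never introduces sample points. The key observation is that for $y\in U_i$ and \emph{every} $y'\in U_i$ one has $y\in Q_{y'}$, hence $|R(x,y)|\le |R(x,y')|+\oscUGd(x,y')$; averaging in $y'$ over $U_i$ with respect to $\mu$ gives
\[
\sup_{y\in U_i}|R(x,y)|\ \le\ \mu(U_i)^{-1}\,\big(|R|+\oscUGd\big)(\chi_{U_i})(x),
\]
so that, with a PU and $\lambda_i=|\nu|(U_i)$,
\[
|R(\nu)(x)|\ \le\ \sum_{i\in I}\lambda_i\,\mu(U_i)^{-1}\,\big(|R|+\oscUGd\big)(\chi_{U_i})(x)\ =\ \big(|R|+\oscUGd\big)(g)(x).
\]
Since $R,\oscUGd\in\AAm$ by assumption, solidity yields $\|R(\nu)|Y\|\le(\|R|\AAm\|+\delta)\,\|\nu|D(\UU^\delta,M,\Yn)\|$ immediately --- no auxiliary kernels $K_1,K_2$ are needed and nothing has to be checked for $\AAm$-membership beyond the hypotheses. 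Your $K_1$-computation is correct but superfluous; your $K_2$-computation cannot be closed under the stated hypotheses, and the fix is not to strengthen the definition but to drop the sample-point splitting altogether.
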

    \begin{proof}
      (a) Assume $F\in Y$ and let 
      \begin{equation*}
	H(x):= \sum_{i\in I} \|\chi_{U_i}F|L^1\|\mu(U_i)^{-1}\chi_{U_i}(x).
      \end{equation*}
      We need to prove $H\in L^\infty_{1/v}$. Fix $k\in I$. Since $Y$ is 
continuously embedded into $L^1_{loc}$ {\cb by assumption}\footnote{See condition 
(Y1) in \cite[Section 3]{fora05}.} there 
      exists a constant $C$ such that $\|\chi_{U_i}F|L^1\|\leq C\|F|Y\|$ for 
all $F\in Y$. %{\cb We will now determine the constant $C = C_i$.} 
      With $K_i{\cb (x,y)=\chi_{U_k}(x)\chi_{U_i}(y)}$ as in \eqref{eq:5.5} 
(and fixed $k\in I$)
      it holds\footnote{$\mu(U_k)^{-1}K_i^\ast(\chi_{U_k})(x) = 
\mu(U_k)^{-1}\int_X \chi_{U_k}(y)\chi_{U_k}(y)\chi_{U_i}(x)\dy = \chi_{U_i}(x)$} 
$\chi_{U_i} = \mu(U_k)^{-1}K_i^\ast(\chi_{U_k})$.
      It is shown in the proof of {\cb Theorem \ref{thm:5.2}(d)} that 
$\|K_i|\AAm\| \leq C'\mu(U_i)v(x_i)$ for some constant $C'>0$ and {\cb some} 
$x_i\in U_i$. We obtain\footnote{$\|K_i^\ast(\chi_{U_k})|F||L^1\| = \langle K^\ast_i(\chi_{U_k}),|F|\rangle = \langle \chi_{U_k},K_i(|F|)\rangle
\|\chi_{U_k}K_i(|F|)|L^1\|$.}
      \begin{align*}
	\|\chi_{U_i}F|L^1\| & {\cb = \|\chi_{U_i}|F||L^1\| = 
\mu(U_k)^{-1}\|K_i^\ast(\chi_{U_k})|F||L^1\| = 
\mu(U_k)^{-1}\|\chi_{U_k}K_i(|F|)|L^1\|} \\
	& {\cb \leq C\mu(U_k)^{-1}\|K_i(|F|)|Y\| \leq 
C\mu(U_k)^{-1}\|K_i|\AAm\|\||F||Y\| = C''\mu(U_i)v(x_i)\||F||Y\|}\\
	& = C''\mu(U_i)v(x_i)\|F|Y\|
      \end{align*}
      where we used solidity of $L^1$ and $Y$. With this we obtain 
      \begin{equation*}
	H(x) \leq C''\|F|Y\|\sum_{i\in I}\chi_{U_i}(x)v(x_i).
      \end{equation*}
      For fixed $x$ this is a finite sum over the index set $I_x = \{i\in I,~ 
x\in U_i\}$. It holds 
      \begin{equation*}
	\sup_{i\in I_x} v(x_i) \leq \sup_{i\in I_x} m(x_i,x)m(x,z) \leq \Cmu 
m(x,z) = \Cmu v(x)
      \end{equation*}
      by \eqref{eq:5.3}. This proves $H\in L^\infty_{1/v}$ and the embedding is 
continuous.
      
      (b) Let $\Phi = (\phi_i)_{i\in I}$ be a PU associated to $\UU$. Further, 
we denote $R_i(x,y):= \phi_i(y)R(x,y)$. Clearly
      we have $R(x,y) = \sum_{i\in I} R_i(x,y)$. We obtain
      \begin{equation}\label{eq:A}\tag{A}
	|R_i(\nu)(x)| = |\int_X R_i(x,y) d\nu(y)|\leq \int_{U_i} |R_i(x,y)| 
d|\nu|(y) \leq |\nu|(U_i) \|R_i(x,\cdot)\|_\infty.
      \end{equation}
      {\cb On the other hand, }
      \begin{equation*}
	\mu(U_i)\|R_i(x,\cdot)\|_\infty \leq \int_X \chi_{U_i}(y)\sup_{z\in 
U_i} |R(x,z)|\dy.
      \end{equation*}
      Since the frame $\mathcal F$ is assumed to have property $D[\delta,m]$ we 
obtain by definition of $\oscUGd$ that 
      {\cb
      \begin{align*}
	|R(x,z)| & = |\Gamma(y,z)R(x,z)| = |R(x,y) + \Gamma(y,z)R(x,z) - 
R(x,y)| \\
	& \leq |R(x,y)| + |R(x,y)-\Gamma(y,z)R(x,z)| \leq \oscUGd(x,y)+|R(x,y)| 
\text{ for all } z,y\in U_i.
      \end{align*}}
      This gives
      \begin{equation}\label{eq:B}\tag{B}
	\mu(U_i)\|R_i(x,\cdot)\|_\infty \leq \int_X \chi_{U_i}(y)(\oscUGd(x,y) + 
|R(x,y)|)\dy = (\oscUGd + |R|)(\chi_{U_i})(x).
      \end{equation}
      {\cb Combine \eqref{eq:A} and \eqref{eq:B} to find}
      \begin{align}
	\|R(\nu)|Y\| & = \|\sum_{i\in I} R_i(\nu)|Y\| \leq \|\sum_{i\in I} 
|\nu|(U_i)\mu(U_i)^{-1}(\oscUGd + |R|)(\chi_{U_i})|Y\|\nonumber\\
	& = \left\| (\oscUGd + |R|)\left(\sum_{i\in I} 
|\nu|(U_i)\mu(U_i)^{-1}\chi_{U_i}\right)|Y \right\|\nonumber\\
	& \leq (\|\oscUGd|\AAm\|+\|R|\AAm\|) \|\sum_{i\in I} 
|\nu|(U_i)\mu(U_i)^{-1}\chi_{U_i}|Y \|\nonumber\\
	& = (\|\oscUGd|\AAm\|+\|R|\AAm\|) \|\nu|D(\UU,M,\Yn)\| \label{eq:5.7}
      \end{align}
      {\cb where we used the finite overlap property of the covering $\UU$.} 
This proves the claim.
    \end{proof}
    
    Using this Lemma we may prove that the assumption made in Proposition 3.7 
holds in case that the general assumptions of this section 
    are true\footnote{i.e. $\mathcal F$ possesses at least property 
$D[\delta,1]$ for some $\delta > 0$ and $Y$ is such that $\mathcal F$ has 
    property $D[\delta,m]$ for some $\delta > 0$, where $m$ is the weight 
function associated to $Y$.}.
    
    \begin{corollary}\label{cor:5.6}
      If the frame $\mathcal F$ has property $D[\delta,m]$ then $R(Y)\subset 
L^\infty_{1/v}$ with continuous embedding. In particular, Proposition 3.7 holds.
    \end{corollary}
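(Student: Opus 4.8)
The plan is to obtain the embedding by chaining the two halves of Lemma~\ref{lem:5.5}: part~(a) estimates the local $L^1$-mass of an $F\in Y$ against the decomposition space modelled on $(L^\infty_{1/v})^\natural$, and part~(b) maps such a decomposition space back into the target space by $R$, as long as the frame has the matching property $D[\delta,\cdot]$. So the real work is to verify that $L^\infty_{1/v}$ may legitimately play the role of ``$Y$'' in Lemma~\ref{lem:5.5}(b).

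To that end I would first record that the weight $m'$ associated to $L^\infty_{1/v}$, namely $m'(x,y)=\max\{v(x)/v(y),\,v(y)/v(x)\}$, satisfies $m'(x,y)\le m(x,y)$ for all $x,y\in X$ with constant exactly $1$: indeed, with $v=v_z$ for the fixed $z$, one has $v(x)/v(y)=m(x,z)/m(y,z)\le m(x,y)$ because $m$ is an admissible (hence submultiplicative and symmetric) weight, and symmetrically $v(y)/v(x)\le m(y,x)=m(x,y)$. Consequently $\AAm\subseteq\mathcal A_{m'}$ with $\|K|\mathcal A_{m'}\|\le\|K|\AAm\|$ for every kernel $K$. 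Three consequences follow at once: \eqref{eq:5.3} holds for $m'$ with the same constant $\Cmu$; $\|R|\mathcal A_{m'}\|\le\|R|\AAm\|<\infty$; and if $\UU^\delta$ together with a phase function $\Gamma$ witnesses $\|\oscUGd|\AAm\|<\delta$, then the \emph{same} $\UU^\delta$ and $\Gamma$ give $\|\oscUGd|\mathcal A_{m'}\|<\delta$. In other words, $\mathcal F$ also has property $D[\delta,m']$, with the same corresponding moderate admissible covering $\UU^\delta$. (Here the constant-$1$ bound $m'\le m$ is essential: a bound $m'\le Cm$ with $C>1$ would only give $\|\oscUGd|\mathcal A_{m'}\|<C\delta$, which is not small enough.)

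Now fix $F\in Y$ and identify it with the measure $\nu_F=F\,d\mu\in M_{loc}(X)$, so that $R(\nu_F)=R(F)$. Lemma~\ref{lem:5.5}(a), applied with the covering $\UU^\delta$, gives $F\in D(\UU^\delta,L^1,(L^\infty_{1/v})^\natural)$, which sits as a closed subspace inside $D(\UU^\delta,M,(L^\infty_{1/v})^\natural)$, together with the bound $\|\nu_F|D(\UU^\delta,M,(L^\infty_{1/v})^\natural)\|\le C\|F|Y\|$. By the previous paragraph we may invoke Lemma~\ref{lem:5.5}(b) with $L^\infty_{1/v}$ in place of $Y$ (and $(L^\infty_{1/v})^\natural$ as its associated sequence space), which yields $R(F)=R(\nu_F)\in L^\infty_{1/v}$ and
\[
  \|R(F)|L^\infty_{1/v}\|\le C'\,\|\nu_F|D(\UU^\delta,M,(L^\infty_{1/v})^\natural)\|\le C'C\,\|F|Y\|.
\]
This is exactly the asserted continuous embedding $R(Y)\subset L^\infty_{1/v}$; since $R(Y)\subset L^\infty_{1/v}$ is the hypothesis under which Proposition~3.7 is stated in \cite{fora05}, that proposition now applies under the general assumptions of the present section.

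The step I expect to need the most care is the bookkeeping around coverings and weights: Lemma~\ref{lem:5.5}(b) is phrased for a covering \emph{corresponding} to $D[\delta,m]$, so one must make sure that the covering used in part~(a) and the one used in part~(b) can be taken to be the same $\UU^\delta$, and that this single $\UU^\delta$ (with a single phase $\Gamma$) simultaneously witnesses $D[\delta,m]$ and $D[\delta,m']$. As noted above this is immediate from $m'\le m$, but it is the one place where a careless reading could introduce a spurious dependence on two unrelated coverings or a deteriorated constant. Everything else is a direct concatenation of Lemma~\ref{lem:5.5}(a) and~(b).
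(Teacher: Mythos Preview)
Your proposal is correct and follows exactly the paper's approach: chain Lemma~\ref{lem:5.5}(a) with Lemma~\ref{lem:5.5}(b) applied to $L^\infty_{1/v}$ in place of $Y$. The paper's proof is terse and leaves implicit the verification that $L^\infty_{1/v}$ is a legitimate target for Lemma~\ref{lem:5.5}(b) (i.e., that $\mathcal F$ has property $D[\delta,m']$ with the \emph{same} covering $\UU^\delta$), which you correctly identify as the one point requiring care and establish via $m'\le m$.
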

    \begin{proof}
      Suppose $F\in Y$. By Lemma \ref{lem:5.5}(a) it holds $F\in 
D(\UU^\delta,L^1,(L^\infty_{1/v})^\natural)$, {\cb where $\UU^\delta$ is a covering of $X$ such 
that $\|\oscUGd|\AAm\|<\delta$ for a suitable phase function $\Gamma$. Such $\UU^\delta$ and $\Gamma$ exist, 
since $\mathcal F$ has property $D[\delta,m]$} and by Lemma \ref{lem:5.5}(b) we get $R(F)\in L^\infty_{1/v}$.
    \end{proof}
    
    \subsection{Atomic Decompositions and Banach Frames}
    
    Let us give the definition of an atomic decomposition and of a Banach 
frame. For a Banach space $B$ we denote its dual by $B^\ast$. 
    
    \begin{definition}\label{def:5.4}
      A family $(g_i)_{i\in I}$ in a Banach space  is called an atomic 
decomposition for $B$ if {\cb for some countable index set $I$} there exist a 
BK-space $(B^\natural(I),\| \cdot|B^\natural\|)$, $B^\natural = B^\natural(I)$, 
and linear bounded functionals $(\lambda_i)_{i\in I} \subset B^\ast$ (not 
necessarily unique) such that 
      \begin{itemize}
	\item $(\lambda_i(f))_{i\in I} \in B^\natural$ for all $f\in B$ and 
there exists a constant $0<C_1<\infty$ {\cb independent of $f$} such that 
	\begin{equation*}
	  \|(\lambda_i(f))_{i\in I}|B^\natural\| \leq C_1\|f|B\|,
	\end{equation*}
	\item if $(\lambda_i)_{i\in I} \in B^\natural$ then\footnote{Note that 
	here $(\lambda_i)_{i\in I}$ is used to denote a sequence in the sequence 
	space, not a sequence of functionals in $B^\ast$.}
	$f = \sum_{i\in I} \lambda_i g_i \in B$ (with unconditional convergence 
in some suitable topology) and there exists a constant $0<C_2<\infty$ {\cb 
independent of $(\lambda_i)_{i\in I}$} such that 
	\begin{equation*}
	  \|f|B\| \leq C_2\|(\lambda_i)_{i\in I}|B^\natural\|,
	\end{equation*}
	\item $f = \sum_{i\in I} \lambda_i(f)g_i$ for all $f\in B$.
      \end{itemize}
      \end{definition}

      We remark that this is not a standard definition (and probably such is 
not available). For instance, Triebel uses the same terminology with a slightly 
different meaning \cite[p.59 and p.160]{tr92}. The next 
definition is due to Gr\"ochenig \cite{gr91}.
      
      \begin{definition}\label{def:5.5}
	Suppose $(B,\|\cdot|B)$ is a Banach space. A family $(h_i)_{i\in I} 
\subset B^\ast$ is called a Banach frame for $B$ if there exists a BK-space 
$(B^\flat,\|\cdot|B^\flat)$, $B^\flat=B^\flat(I)$, and a linear bounded 
reconstruction operator $\Omega: B^\flat\rightarrow B$ such that 
	\begin{itemize}
	  \item if $f\in B$ then $((h_i(f))_{i\in I}\in B^\flat$, and there 
exist constants $0 <C_1\leq C_2 <\infty$ {\cb independent of $f$} such that 
	  \begin{equation*}
	    C_1\|f|B\| \leq \|(h_i(f))_{i\in I}|B^\flat\| \leq C_2\|f|B\|,
	  \end{equation*}
	  \item $\Omega(h_i(f))_{i\in I} = f$ for all $f\in B$.
	\end{itemize}
      \end{definition}
      
      Clearly, these definitions apply also with $B^\ast$ replaced by the 
anti-dual $B\urcorner$. Now we are prepared to state the main result of this 
article.
      
      \begin{theorem}\label{thm:5.7}
	Assume that $m$ is an admissible weight. Suppose the frame $\mathcal 
F=\{\psi_x\}_{x\in X}$ possesses property $D[\delta,m]$ for some $\delta>0$ 
and {\cb let $\UU^\delta$ denote 
a corresponding moderate admissible covering of $X$ such 
that 
	\begin{equation}\label{eq:5.8}
	  \delta (\|R|\AAm\| + \max\{C_{m,\UU^\delta}\|R|\AAm\|,\|R|\AAm\| + \delta\}) \leq 1
	\end{equation}
	where $C_{m,\UU^\delta}$ is the constant in \eqref{eq:5.3}}. Choose points $(x_i)_{i\in I} \subset X$ 
	such that $x_i\in U_i$. Moreover assume that $(Y,\|\cdot|Y\|)$ is a Banach space 
	fulfilling properties (Y1) and (Y2).
	
	Then $\mathcal F_d:=\{\psi_{x_i}\}_{i\in I} \subset \mathcal K^1_v$ is 
both an atomic decomposition of $\CoYt$ with corresponding sequence space $\Yn$ 
and a Banach frame for $\CoY$ with corresponding 
	sequence space $\Yf$. Moreover, there exists a 'dual frame' $\widehat{\mathcal F_d}:=\{e_{i}\}_{i\in I} \subset \mathcal H^1_v$ such that
	\begin{itemize}
	  \item[(a)] we have the norm equivalences 
	  \begin{equation*}
	    \|f|\CoY\| \cong \|(\langle f,\psi_{x_i}\rangle)_{i\in I}|\Yf\|\quad \text{and}\quad \|f|\CoYt\| \cong \|(\langle f,e_i\rangle)_{i\in I}|\Yn\|,
	  \end{equation*}
	  \item[(b)] if $f\in \CoYt$ then
	  \begin{equation*}
	    f = \sum_{i\in I} \langle f,e_i\rangle\psi_{x_i}
	  \end{equation*}
	  with unconditional norm convergence in $\CoYt$ if the finite sequences are dense in $\Yn$ and with unconditional 
	  convergence in the weak-$\ast$ topology induced from $(\mathcal H^1_v)\urcorner$ otherwise.
	  \item[(c)] if the finite sequences are dense in $\Yf$, then for all $f\in\CoY$ it holds 
	  \begin{equation*}
	    f = \sum_{i\in I} \langle f,\psi_{x_i}\rangle e_i
	  \end{equation*}
	  with unconditional convergence in the norm of $\CoY$.
	\end{itemize}
      \end{theorem}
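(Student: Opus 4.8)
The plan is to discretize the two reconstruction formulas $f=\int_X\langle f,\psi_y\rangle S^{-1}\psi_y\dy$ and $f=\int_X\langle f,S^{-1}\psi_y\rangle\psi_y\dy$ with the help of a partition of unity subordinate to $\UU^\delta$, to recognise that the sampling error is governed by the kernel $\oscUGd$, and then to invert the resulting perturbation of the identity by a Neumann series; the smallness hypothesis \eqref{eq:5.8} is exactly what forces this perturbation to have norm strictly less than $1$. All estimates will be reduced to the action of kernels from $\AAm$ on $Y$ through property (Y2) and solidity, as in the preparatory Lemmas~\ref{lem:5.4} and~\ref{lem:5.5} and Theorem~\ref{thm:5.2}; here I would use that the transforms $f\mapsto\langle f,\psi_\cdot\rangle$ and $f\mapsto\langle f,S^{-1}\psi_\cdot\rangle$ induce equivalent norms on $\CoY$ and on $\CoYt$ (with equivalence constants controlled by $\|R|\AAm\|$), together with the associated reproducing identities.

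First I would fix a partition of unity $\Phi=(\phi_i)_{i\in I}$ associated to $\UU^\delta=(U_i)_{i\in I}$ (which exists by the remark following Definition~\ref{def:5.3}), form the weak (Pettis) integrals $g_i:=\int_X\phi_i(y)\,\Gamma(y,x_i)\,S^{-1}\psi_y\dy$, which lie in $\mathcal H^1_v$ because $\phi_i$ has compact support and $y\mapsto\psi_y$ is weakly continuous, and define
\[
\UP f:=\sum_{i\in I}\langle f,\psi_{x_i}\rangle\,g_i,\qquad \SP:=\UP^{\ast},\quad\text{so that }\SP h=\sum_{i\in I}\langle h,g_i\rangle\,\psi_{x_i}.
\]
Because $\supp\phi_i\subset U_i$, $x_i\in U_i\subset Q_y$ for $y\in U_i$, and $\sum_i\phi_i\equiv1$, writing the difference $f-\SP f$ on the transform side and inserting the reproducing formula for $S^{-1}\psi$ exhibits it as a sum of terms dominated pointwise by $\oscUGd(\cdot,y)\phi_i(y)$ times the transform of $f$, plus correction terms that carry factors $|R|$ and $\Cmu$ and arise when $m$ is transported across overlapping members of $\UU^\delta$. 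Passing to norms via (Y2), solidity, Lemma~\ref{lem:5.5} and the sequence-space estimates of Lemma~\ref{lem:5.4} and Theorem~\ref{thm:5.2}(c),(d), one obtains that $\UP$ and $\SP$ are bounded on $\CoY$, $\CoYt$ and $\mathcal H^1_v$, and that $\|I-\UP\|$ and $\|I-\SP\|$ are at most a quantity of the shape $\|\oscUGd|\AAm\|\bigl(\|R|\AAm\|+\max\{\Cmu\|R|\AAm\|,\ \|R|\AAm\|+\|\oscUGd|\AAm\|\}\bigr)$, the two branches of the maximum matching the estimates on $\CoY$ and on $\CoYt$. Since $\|\oscUGd|\AAm\|<\delta$ and \eqref{eq:5.8} holds, this quantity is $<1$, so $\UP$ and $\SP$ are invertible on each of the three spaces by Neumann series.

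I would then put $e_i:=\UP^{-1}g_i$; since $g_i\in\mathcal H^1_v$ and $\UP^{-1}$ preserves $\mathcal H^1_v$, this gives $\widehat{\mathcal F_d}=\{e_i\}\subset\mathcal H^1_v$, while $\mathcal F_d=\{\psi_{x_i}\}\subset\mathcal K^1_v$ by the standing hypotheses on $\mathcal F$. Statement~(c) then follows from $f=\UP^{-1}\UP f=\sum_i\langle f,\psi_{x_i}\rangle e_i$ for $f\in\CoY$, and statement~(b) from $f=\SP\SP^{-1}f=\sum_i\langle \SP^{-1}f,g_i\rangle\psi_{x_i}$ for $f\in\CoYt$ together with $\SP^{-1}=(\UP^{-1})^{\ast}$, which turns $\langle\SP^{-1}f,g_i\rangle$ into $\langle f,\UP^{-1}g_i\rangle=\langle f,e_i\rangle$; the convergence is unconditional in norm when the finite sequences are dense in $\Yn$ (using Theorem~\ref{thm:5.2}(b)) and in the weak-$\ast$ topology of $(\mathcal H^1_v)\urcorner$ otherwise. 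For the norm equivalences in~(a), the bounds $\|(\langle f,\psi_{x_i}\rangle)_i|\Yf\|\lesssim\|f|\CoY\|$ and $\|(\langle f,e_i\rangle)_i|\Yn\|\lesssim\|f|\CoYt\|$ follow from the pointwise control of $\sup_{U_i}|\langle f,\psi_\cdot\rangle|$ and $\int_{U_i}|\langle f,S^{-1}\psi_\cdot\rangle|$ — exactly the estimate used for \eqref{eq:B} — combined with Theorem~\ref{thm:5.2} and Lemma~\ref{lem:5.4}, while the reverse bounds are obtained by applying the bounded reconstruction operators $\Omega:(c_i)\mapsto\sum_ic_ie_i$ (from $\Yf$ to $\CoY$) and $(c_i)\mapsto\sum_ic_i\psi_{x_i}$ (from $\Yn$ to $\CoYt$), whose boundedness follows from Lemmas~\ref{lem:5.4} and~\ref{lem:5.5} and the invertibility of $\UP$ and $\SP$. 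Matching these boundedness and reconstruction statements against Definitions~\ref{def:5.4} and~\ref{def:5.5} completes the argument.

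The step I expect to be the main obstacle is the constant bookkeeping in the second paragraph: one has to keep the contributions of $\|R|\AAm\|$ (from the equivalence of the two transforms and from $R$ acting on $Y$), of $\Cmu$ from \eqref{eq:5.3}, and of $\|\oscUGd|\AAm\|$ separate, and arrange them so that the final bound is precisely the left-hand side of \eqref{eq:5.8}, being careful about the case $\mu(U_i)<1$. A more structural, if less computational, point is to ensure that the single family $\{e_i\}$ simultaneously serves as reconstruction atoms for the Banach frame on $\CoY$ and as coefficient functionals for the atomic decomposition on $\CoYt$; this relies on the identity $\SP=\UP^{\ast}$ and on the duality between $\CoY$, $\CoYt$ and the reservoirs $\mathcal H^1_v$, $\mathcal K^1_v$ recalled in Section~4.
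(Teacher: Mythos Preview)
Your overall strategy—Neumann inversion of a perturbation of the identity whose defect is controlled by $\|\oscUGd|\AAm\|$, yielding exactly the bound \eqref{eq:5.8}—matches the paper's, but the implementation is genuinely different. The paper never works directly on $\CoY$ or $\CoYt$: it defines a \emph{single} operator $U_\Phi F(x)=\sum_i c_i F(x_i)R(x,x_i)$, $c_i=\int\phi_i\,d\mu$, acting on the reproducing-kernel range $R(Y)$, proves $\|\mathrm{Id}-U_\Phi|R(Y)\to R(Y)\|<1$ (Theorem~\ref{thm:5.13}), and then transports everything to the coorbit side via the correspondence principle (Proposition~3.7). Two points of contrast: (i) the paper's $U_\Phi$ contains no phase $\Gamma$; the phase only enters through an auxiliary operator $S_\Phi$ (not the adjoint of $U_\Phi$) used inside a triangle-inequality splitting to obtain \eqref{eq:5.13}; (ii) the \emph{same} inversion of $U_\Phi$ on $R(Y)$ handles both the Banach frame (applied to $Vf$) and the atomic decomposition (applied to $Wf$), and the dual atoms come from $E_i=c_i\,U_\Phi^{-1}(W\psi_{x_i})=Ve_i$, with the functional identity $\lambda_i(f)=\langle f,e_i\rangle$ checked via the elementary relation $\langle U_\Phi F,W\psi_x\rangle=\langle F,U_\Phi W\psi_x\rangle$ on $R(Y)$.

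Your route—an adjoint pair $\UP,\SP$ acting directly on the coorbit spaces, with $\Gamma$ built into the atoms $g_i$—is plausible but carries extra structural load that the paper deliberately avoids. The step $\SP^{-1}=(\UP^{-1})^{\ast}$, and more generally the claim that a single adjunction links the operator on $\CoY$ to the one on $\CoYt$, presupposes a duality between these spaces (or at least compatible actions on $\mathcal H^1_v$ and $(\mathcal H^1_v)\urcorner$ that restrict correctly) which is not part of the standing hypotheses and is nowhere established in Section~4; in this generality $(\CoY)^\ast$ need not be $\CoYt$. The paper sidesteps all of this by staying inside $R(Y)$, where only the scalar pairing $\langle F,W\psi_x\rangle$ is needed. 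Your scheme would buy a conceptually symmetric picture of the dual frame, but to make it rigorous you would have to prove separately that $\UP$ and $\SP$ preserve each of $\mathcal H^1_v$, $\CoY$, $\CoYt$ with the right norm bounds and that the adjoint relation survives those restrictions—work that the paper's single-operator, transform-side argument simply does not require.
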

      
      Also discretizations of the canonical dual frame lead to Banach frames and atomic decompositions.
      
      \begin{theorem}\label{thm:5.8}
	Under the same assumptions and with the same notation as in the previous theorem 
	$\tilde{\mathcal F}_d:=\{S^{-1}\psi_{x_i}\}_{i\in I} \subset \mathcal H^1_v$  is
	both an atomic decomposition of $\CoY$ (with corresponding sequence spaces $\Yn$) and
	a Banach frame for $\CoYt$ (with corresponding sequence space $\Yf$). Moreover, there
	exists a 'dual frame' $\widehat{\tilde{\mathcal{F}_d}}:=\{\tilde{e}_{i}\}_{i\in I} \subset \mathcal K^1_v$ 
	with the analogous properties as in the previous theorem.
      \end{theorem}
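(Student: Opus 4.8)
The plan is to deduce Theorem~\ref{thm:5.8} from Theorem~\ref{thm:5.7} by applying the latter, verbatim, to the canonical dual frame $\widetilde{\mathcal F} = \{\widetilde\psi_x\}_{x\in X} := \{S^{-1}\psi_x\}_{x\in X}$ in place of $\mathcal F$. First I would record the elementary facts that $\widetilde{\mathcal F}$ is again a continuous frame in the sense of Section~2, with frame operator $\widetilde S = S^{-1}$ (which is bounded, positive, self-adjoint and invertible), and that its canonical dual frame is $\{\widetilde S^{-1}\widetilde\psi_x\}_x = \{S S^{-1}\psi_x\}_x = \{\psi_x\}_x = \mathcal F$; in short, ``the dual of the dual is the original''. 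Measurability and the weak-$\ast$ continuity requirements transfer from $\mathcal F$ since $S^{-1}$ is a bounded operator on $\mathcal H$.

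The crucial point is that property $D[\delta,m]$ is inherited by $\widetilde{\mathcal F}$ with the \emph{same} moderate admissible covering $\UU^\delta$ and the \emph{same} phase function $\Gamma$. Indeed, writing $\widetilde R$ and $\widetilde{\text{osc}}_{\UU,\Gamma}$ for the quantities of Definition~\ref{def:5.2} associated to $\widetilde{\mathcal F}$, one has, for all $x,y\in X$ and $z\in Q_y$, by self-adjointness of $S^{-1}$,
\[
  \langle \widetilde S^{-1}\widetilde\psi_x,\ \widetilde\psi_y - \Gamma(y,z)\widetilde\psi_z\rangle
  = \langle S S^{-1}\psi_x,\ S^{-1}(\psi_y - \Gamma(y,z)\psi_z)\rangle
  = \langle S^{-1}\psi_x,\ \psi_y - \Gamma(y,z)\psi_z\rangle ,
\]
so that $\widetilde{\text{osc}}_{\UU^\delta,\Gamma} = \oscUGd$ pointwise and in particular $\|\widetilde{\text{osc}}_{\UU^\delta,\Gamma}|\AAm\| = \|\oscUGd|\AAm\| < \delta$; taking $z = y$ gives $\widetilde R = R$, hence $\|\widetilde R|\AAm\| = \|R|\AAm\|$ and the constant $C_{m,\UU^\delta}$ is unchanged, so the admissibility condition \eqref{eq:5.8} holds verbatim for $\widetilde{\mathcal F}$. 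Likewise (Y1), (Y2) and the admissibility of $m$ are conditions on $Y$ alone and are untouched, and the points $x_i\in U_i$ need not be changed.

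Next I would identify the coorbit spaces and the reservoir/test spaces of $\widetilde{\mathcal F}$ with those of $\mathcal F$ directly from the definitions of Section~4: since $\widetilde{\mathcal F}$ arises from $\mathcal F$ by passing to the canonical dual, the voice transform $f\mapsto (\langle f,\widetilde\psi_x\rangle)_{x}$ is exactly the one defining $\CoYt$, whence $\text{Co}_{\widetilde\psi}Y = \CoYt$, and, because the dual of $\widetilde{\mathcal F}$ is $\mathcal F$, also $\widetilde{\text{Co}}_{\widetilde\psi}Y = \CoY$; for the same reason the two test spaces swap roles, $\mathcal H^1_v$ and $\mathcal K^1_v$ changing places, while the sequence spaces $\Yf(\UU^\delta)$ and $\Yn(\UU^\delta)$ are unaffected. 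Feeding all of this into Theorem~\ref{thm:5.7} applied to $\widetilde{\mathcal F}$ yields that $\{S^{-1}\psi_{x_i}\}_{i\in I} \subset \mathcal H^1_v$ is a Banach frame for $\text{Co}_{\widetilde\psi}Y = \CoYt$ with sequence space $\Yf$ and an atomic decomposition of $\widetilde{\text{Co}}_{\widetilde\psi}Y = \CoY$ with sequence space $\Yn$, together with a dual family $\{\widetilde e_i\}_{i\in I}\subset \mathcal K^1_v$ satisfying the analogues of (a)--(c); this is precisely the assertion of Theorem~\ref{thm:5.8}.

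I expect the main (and essentially only) obstacle to be the bookkeeping of the previous step: carefully matching the Section~4 definitions of $\CoY$, $\CoYt$, $\mathcal H^1_v$, $\mathcal K^1_v$ and their anti-duals for the frame $\widetilde{\mathcal F}$ against those for $\mathcal F$, so that the exchange of roles is genuinely justified — including that the reservoirs $(\mathcal H^1_v)\urcorner$ and $(\mathcal K^1_v)\urcorner$ switch accordingly — and checking that the unconditional-convergence statements in (b) and (c), as well as the density hypotheses on $\Yf$ and $\Yn$, translate unchanged. Everything else reduces to a direct citation of Theorem~\ref{thm:5.7}.
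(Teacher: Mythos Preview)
Your proposal is correct and is essentially the paper's own argument made explicit: the paper disposes of Theorem~\ref{thm:5.8} in a single line at the end of the proof of Theorem~\ref{thm:5.7}, namely ``Theorem~\ref{thm:5.8} is proved in the same way by exchanging the roles of $V$ and $W$'', and passing to the canonical dual frame $\widetilde{\mathcal F}=\{S^{-1}\psi_x\}$ is precisely the operation that swaps $V$ and $W$ (your computation $\widetilde R=R$, $\widetilde{\text{osc}}_{\UU^\delta,\Gamma}=\oscUGd$ being the verification that the hypotheses transfer unchanged). The only difference is packaging: you invoke Theorem~\ref{thm:5.7} as a black box for $\widetilde{\mathcal F}$, whereas the paper gestures at rerunning the proof with the two transforms interchanged; the mathematical content is identical.
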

      
      Let us remark that the two previous theorems hold ``uniformly in Y''. Namely, if $m$ is fixed then the
      constant $\delta$ is the same for all function spaces $Y$ satisfying properties (Y1) and (Y2) with that
      specific $m$. In particular, the same covering $\UU^\delta = (U_i)_{i\in I}$ can be used for all those spaces $Y$ 
      and $(\psi_{x_i})_{i\in I}$, $x_i\in U_i$, is a Banach frame for all coorbit spaces $\CoY$ at the same time.
      
      The previous theorems imply an embedding result.
      
      \begin{corollary}\label{cor:5.9}
	We have the following continuous embeddings
	\begin{equation*}
	  \mathcal H^1_v \subset \CoY\subset (\mathcal K^1_v)\urcorner\quad \text{and}\quad \mathcal K^1_v \subset \CoYt\subset (\mathcal H^1_v)\urcorner
	\end{equation*}
      \end{corollary}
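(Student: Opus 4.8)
The plan is to derive the four embeddings directly from the atomic decomposition and Banach frame properties established in Theorems \ref{thm:5.7} and \ref{thm:5.8}, together with the norm equivalences in part (a) of those theorems and the basic sequence-space inclusions. The crucial elementary fact is that $\Yf \subset \Yn$ with continuous inclusion: indeed, since $\mu(U_i) \geq D$ for all $i\in I$ by moderateness, we have $\mu(U_i)^{-1}\chi_{U_i} \leq D^{-1}\chi_{U_i}$ pointwise, so by solidity of $Y$ one gets $\|(\lambda_i)_{i\in I}|\Yn\| \leq D^{-1}\|(\lambda_i)_{i\in I}|\Yf\|$. This inclusion is the hinge on which the telescoping of the two chains of embeddings rests.

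First I would treat $\CoY \subset (\mathcal K^1_v)\urcorner$ and $\mathcal H^1_v \subset \CoY$. For the right-hand inclusion: given $f\in\CoY$, Theorem \ref{thm:5.7}(a) gives $(\langle f,\psi_{x_i}\rangle)_{i\in I}\in\Yf \subset \Yn$, and the second norm equivalence in \ref{thm:5.7}(a) identifies $\Yn$ with the coefficient space for $\CoYt$ via the dual frame $\{e_i\}$; pairing against $\CoYt$ through the reproducing formula realizes $f$ as a bounded antilinear functional on $\mathcal K^1_v$, with $\|f|(\mathcal K^1_v)\urcorner\| \lesssim \|f|\CoY\|$. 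For the left-hand inclusion $\mathcal H^1_v \subset \CoY$: one takes $Y = L^1_v$, for which $\CoYt = \mathcal K^1_v$ and (by the analogous statements, or by the general theory of Section 3--4) $\mathcal H^1_v = \mathrm{Co}(L^1_v)$ in the appropriate sense; alternatively, invoke that property $D[\delta,1]$ is assumed, so $L^1_v$ satisfies (Y1), (Y2), and apply Theorem \ref{thm:5.7} with $Y = L^1_v$, obtaining $(L^1_v)^\flat = \ell^1_{b_1}$ and $(L^1_v)^\natural = \ell^1_{d_1}$ by Theorem \ref{thm:5.2}(c), then using solidity to compare against the coefficient sequences of a general $f\in\mathcal H^1_v$.

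The second chain $\mathcal K^1_v \subset \CoYt \subset (\mathcal H^1_v)\urcorner$ is handled symmetrically, now invoking Theorem \ref{thm:5.8} in place of Theorem \ref{thm:5.7}: the roles of $\CoY$ and $\CoYt$, and of $\Yf$ and $\Yn$, are swapped, and the dual frame $\{\tilde e_i\}\subset\mathcal K^1_v$ from Theorem \ref{thm:5.8} plays the part that $\{e_i\}$ played above. Concretely, for $f\in\CoYt$ one has $(\langle f,\tilde e_i\rangle)_{i\in I}\in\Yn$; reconstructing via $\{S^{-1}\psi_{x_i}\}$ and pairing against $\mathcal H^1_v$ (using $\Yn$ as coefficient space, and the inclusion $\Yf\subset\Yn$ only where needed) yields $\|f|(\mathcal H^1_v)\urcorner\|\lesssim\|f|\CoYt\|$, while $\mathcal K^1_v\subset\CoYt$ follows by specializing $Y = L^1_v$ as before.

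I expect the main obstacle to be bookkeeping rather than any deep difficulty: one must be careful about which sequence space ($\Yf$ or $\Yn$) pairs with which coorbit space ($\CoY$ or $\CoYt$) through which frame ($\{\psi_{x_i}\}$, $\{e_i\}$, $\{S^{-1}\psi_{x_i}\}$, or $\{\tilde e_i\}$), and about the distinction between convergence in norm (when finite sequences are dense) and convergence in the weak-$*$ topology otherwise — the latter being precisely why the outer terms of the embedding chains are anti-duals $(\mathcal K^1_v)\urcorner$ and $(\mathcal H^1_v)\urcorner$ rather than the spaces themselves. Ensuring that the specialization to $Y = L^1_v$ genuinely recovers $\mathcal H^1_v$ and $\mathcal K^1_v$ (and not merely coorbit spaces isomorphic to them) requires citing the identifications from Sections 3--4 of \cite{fora05}, which I would flag explicitly rather than re-prove.
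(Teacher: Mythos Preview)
Your proposal takes a more circuitous route than necessary and leaves a genuine gap in the inner embeddings.

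For the outer embeddings $\CoY \subset (\mathcal K^1_v)\urcorner$ and $\CoYt \subset (\mathcal H^1_v)\urcorner$, you are working too hard: these hold \emph{by definition} of the coorbit spaces (they are defined as subspaces of the respective anti-duals in Section~3 of \cite{fora05}), and continuity of the inclusion is supplied by Lemma~3.2 together with Corollary~\ref{cor:5.6}. Your frame-duality argument (``pairing against $\CoYt$ through the reproducing formula'') is both vague and unnecessary.

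For the inner embeddings, the paper's argument is quite different from yours and cleaner. It shows that \emph{each individual atom} $\psi_x$ lies in $\CoYt$ with $\|\psi_x|\CoYt\| \leq C\,v(x)$: since any $x$ can be taken as one of the sampling points $x_i$, and since $\delta_i \in \Yn$ by Lemma~\ref{lem:5.1}, the atomic-decomposition bound from Theorem~\ref{thm:5.7} gives $\|\psi_{x_i}|\CoYt\| \leq C_2\|\delta_i|\Yn\| = C_2\|\mu(U_i)^{-1}\chi_{U_i}|Y\|$, and the latter is $\leq C\,v(x_i)$ by the kernel estimate for $K_i$ from the proof of Theorem~\ref{thm:5.2}(d). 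Then Corollary~3.4 of \cite{fora05} converts this uniform atom bound into $\mathcal K^1_v \hookrightarrow \CoYt$. Your route via specializing $Y = L^1_v$ would instead require the sequence-space embedding $(L^1_v)^\flat = \ell^1_{b_1} \hookrightarrow \Yf$ for the \emph{general} $Y$, which you neither state nor prove; ``using solidity to compare against the coefficient sequences'' does not establish this. The embedding does in fact hold, but proving it comes down to exactly the same estimate $\|\chi_{U_i}|Y\| \leq C\,r(i)$ that the paper uses --- so even if completed, your detour through $Y = L^1_v$ and sequence-space comparison gains nothing over the paper's direct atom-by-atom argument followed by Corollary~3.4.
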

      \begin{proof}
	By {\cb definition\footnote{The reference to Proposition 3.7 and Corollary \ref{cor:5.6} in \cite{fora05} to show $Wf\in R(Y)\in L^\infty_{1/v}$ is superfluous.} $f\in\CoYt$ implies 
	$f\in (\mathcal H^1_v)\urcorner$ and the embedding $\CoYt\subset (\mathcal H^1_v)\urcorner$ is continuous 
	by Lemma 3.2 and Corollary \ref{cor:5.6}}. Lemma \ref{lem:5.1} shows that the Dirac element $\delta_i(j):= \delta_{i,j}$ is contained in $\Yn$ and 
	this in turn implies with Theorem \ref{thm:5.7}\footnote{By the second point of the definition of atomic decompositions.} that all $\psi_{x_i}, i\in I$, are contained in $\CoYt$ 
	with\footnote{The reasoning behind the $\ell^1_v$ estimate in \cite{fora05} is unclear. Note however that we obtain, with $K_i$ as in \eqref{eq:5.5}, 
	$\chi_{U_i} = \mu(U_k)^{-1}K_i^\ast(\chi_{U_k})$. By the estimates in the proof of Theorem \ref{thm:5.2}(d), we have $\|K_i|\AAm\|\leq \Cmu \max\{\mu(U_k),\mu(U_i)\}\sup_{x\in U_i}v(x)$.
	By the definition of $v$ and property \eqref{eq:5.3} of $m$, we obtain $\|\mu(U_i)^{-1}\chi_{U_i}|Y\| \leq \Cmu^2 D^{-1} \|\chi_{U_k}|Y\| v(x_i)$ for all $i\in I$ and some fixed $k\in I$.} 
	$\|\psi_{x_i}|\CoYt\|\leq {\cb C_2}\|\delta_i|\Yn\| {\cb = C_2\|\mu(U_i)^{-1}\chi_{U_i}|Y\| \leq  C C_2 v(x_i)}$. 
	Since any $x\in X$ may be chosen as one of the 
	$x_i$ it holds $\psi_x\in\CoYt$ for all $x\in X$ with $\|\psi_x|\CoYt\|\leq C'v(x)$. Corollary 3.4 
	hence implies that $\mathcal K^1_v$ is continuously embedded into $\CoYt$. The other 
	embeddings are shown analogously.
      \end{proof}
      
      We will split the proof of Theorems \ref{thm:5.7} and \ref{thm:5.8} into several lemmas. Let us just explain shortly the idea. Given a moderate admissible covering
      $\UU^\delta = (U_i)_{i\in I}$, a corresponding PU $(\phi_i)_{i\in I}$ and points $x_i\in U_i$, $i\in I$, we define the operator 
      \begin{equation}\label{eq:U}\tag{U}
	U_{\Phi}F(x) := \sum_{i\in I} c_i F(x_i)R(x,x_i)
      \end{equation}
      where $c_i = \int_X \phi_i(x)\dx$. Intuitively, $\UP$ is a discretization of the integral operator $R$.
      
      If $\UP$ is close enough to the operator $R$ on $R(Y)$ this implies that $\UP$ is invertible on $R(Y)$ since $R$ is the identity on $R(Y)$ by Proposition 3.7. Since $Wf\in R(Y)$ 
      whenever $f\in \CoYt$ and $R(x,x_i) = W(\psi_{x_i})(x)$ we conclude
      \begin{equation*}
	Wf = \UP\UP^{-1}Wf = \sum_{i\in I} c_i(\UP^{-1} Wf)(x_i)W(\psi_{x_i})
      \end{equation*}
      resulting in $f = \sum_{i\in I} c_i(\UP^{-1} Wf)(x_i)\psi_{x_i}$ by the correspondence principle stated in Proposition 3.7, {\cb once convergence is ensured}. 
      This is an expansion of an arbitrary $f\in \CoYt$ into the elements $\psi_{x_i}$, $i \in I$, and thus it gives a strong hint that we have in fact an atomic decomposition. 
      Reversing the order of $\UP$ and $\UP^{-1}$ and replacing $Wf$ by $Vf$
      {\cb 
      \begin{equation*}
	Vf = \UP^{-1}\UP Vf = \UP^{-1}\sum_{i\in I} c_i Vf(x_i)W(\psi_{x_i})% \overset{\color{red}?}{=} \sum_{i\in I} c_i Vf(x_i)\UP^{-1}W(\psi_{x_i})
      \end{equation*}}
      leads to a recovery of an arbitrary $f\in \CoY$ from its coefficients $Vf(x_i) = \langle f,\psi_{x_i}\rangle$ and thus we may expect to have a Banach frame. 
      In the following we will make this rough idea precise. 
      In particular, we need to find conditions on $\delta$ that make sure that $\UP$ is close enough to the identity on $R(Y)$ (in fact this is ensured by \eqref{eq:5.8}). 
      Moreover, we will need some results that enable us to prove corresponding norm equivalences.
      
      Let us start with some technical lemmas.
      
      \begin{lemma}\label{lem:5.10}
	Suppose that the frame $\mathcal F$ has property $D[\delta,m]$ for some $\delta>0$ and that $\UU^\delta = (U_i)_{i\in I}$
	is a corresponding moderate admissible covering of $X$. Further, assume $(\lambda_i)_{i\in I}\in \Yn$ and $(x_i)_{i\in I}$ to be 
	points such that $x_i\in U_i$. Then $x\mapsto \sum_{i\in I} \lambda_i R(x,x_i)$ defines a function in $Y$ and 
	\begin{equation}\label{eq:5.9}
	  \|\sum_{i\in I} \lambda_i R(\cdot,x_i)|Y\| \leq C'\|(\lambda_i)_{i\in I}|\Yn\|.
	\end{equation}
	The convergence is pointwise, and if the finite sequences are dense in $\Yn$ it is also in the norm of $Y$. Furthermore, the series 
	$\sum_{i\in I} R(x,x_i)v(x_i)$ converges pointwise and absolutely to a function in $L^\infty_{1/v}$.
      \end{lemma}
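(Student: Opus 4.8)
The plan is to express the map $(\lambda_i)_{i\in I} \mapsto \sum_{i\in I}\lambda_i R(\cdot,x_i)$ as the action of a single kernel on a function associated to the sequence, and then invoke the membership of that kernel in $\AAm$ together with solidity of $Y$. Concretely, I would first bound $|R(x,x_i)|$ pointwise in terms of the oscillation kernel: since $x_i\in U_i$ and, by definition of $\oscUGd$, for any $y\in U_i$ one has (choosing $z = x_i \in U_i \subset Q_y$) the estimate $|R(x,x_i)| \leq |R(x,y)| + |R(x,y)-\Gamma(y,x_i)R(x,x_i)| \leq |R(x,y)| + \oscUGd(x,y)$, exactly as in the proof of Lemma \ref{lem:5.5}(b). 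Averaging this over $y\in U_i$ gives
\begin{equation*}
  \mu(U_i)|R(x,x_i)| \leq \int_X \chi_{U_i}(y)\bigl(|R(x,y)| + \oscUGd(x,y)\bigr)\dy = (|R| + \oscUGd)(\chi_{U_i})(x).
\end{equation*}

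Next I would sum against $|\lambda_i|$: using the above and then pulling the (locally finite, by the finite overlap property \eqref{eq:5.1}) sum inside the kernel,
\begin{equation*}
  \sum_{i\in I}|\lambda_i|\,|R(x,x_i)| \leq \sum_{i\in I}|\lambda_i|\mu(U_i)^{-1}(|R|+\oscUGd)(\chi_{U_i})(x) = (|R|+\oscUGd)\Bigl(\sum_{i\in I}|\lambda_i|\mu(U_i)^{-1}\chi_{U_i}\Bigr)(x).
\end{equation*}
Since $(\lambda_i)_{i\in I}\in\Yn$ the function $\sum_i |\lambda_i|\mu(U_i)^{-1}\chi_{U_i}$ lies in $Y$ with norm $\|(\lambda_i)|\Yn\|$, and $|R|+\oscUGd \in \AAm$ (as $R\in\AAm$ by hypothesis and $\|\oscUGd|\AAm\|<\delta$ by property $D[\delta,m]$), so the right-hand side is in $Y$ with norm at most $(\|R|\AAm\|+\delta)\|(\lambda_i)|\Yn\|$. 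By solidity of $Y$ this gives absolute pointwise convergence of $\sum_i\lambda_i R(x,x_i)$ for a.e.\ $x$, membership of the sum in $Y$, and the bound \eqref{eq:5.9} with $C' = \|R|\AAm\|+\delta$. The final assertion about $\sum_i R(x,x_i)v(x_i)$ follows by taking $\lambda_i = v(x_i)$: one checks $(v(x_i))_{i\in I}\in\Yn$ for $Y = L^\infty_{1/v}$ using $v(x_i)\leq \Cmu v(x)$ for $x\in U_i$ (from \eqref{eq:5.3}), exactly as in the proof of Lemma \ref{lem:5.5}(a), so the same kernel argument applies with $Y$ replaced by $L^\infty_{1/v}$.

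For the norm-convergence claim when finite sequences are dense in $\Yn$: given $(\lambda_i)\in\Yn$ pick finite truncations $(\lambda_i^{(n)})$ converging to it in $\Yn$; applying the bound \eqref{eq:5.9} to the difference $(\lambda_i - \lambda_i^{(n)})$ shows $\sum_i\lambda_i^{(n)}R(\cdot,x_i)\to\sum_i\lambda_i R(\cdot,x_i)$ in the norm of $Y$, and each truncation is a finite sum hence trivially in $Y$. The main obstacle, such as it is, is purely bookkeeping: one must be careful that the interchange of summation and the kernel action is legitimate, but this is immediate from the finite overlap property \eqref{eq:5.1}, which makes every sum $\sum_{i\in I}\chi_{U_i}(y)(\cdots)$ finite for each fixed $y$; everything else is a direct repetition of the oscillation estimate and solidity argument already carried out for Lemma \ref{lem:5.5}.
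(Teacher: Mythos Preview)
Your argument is correct and in fact slightly cleaner than the paper's. Both proofs rest on the same oscillation estimate $|R(x,x_i)|\leq |R(x,y)|+\oscUGd(x,y)$ for $y\in U_i$ (already used in Lemma~\ref{lem:5.5}(b)), but you apply it directly to the terms $|\lambda_i|\,|R(x,x_i)|$ and obtain
\[
\sum_{i\in I}|\lambda_i|\,|R(x,x_i)|\leq(|R|+\oscUGd)\Bigl(\sum_{i\in I}|\lambda_i|\mu(U_i)^{-1}\chi_{U_i}\Bigr)(x),
\]
so that the $\Yn$-norm appears immediately. The paper instead packages the sum as $R(\nu)$ for the discrete measure $\nu=\sum_i\lambda_i\epsilon_{x_i}$, invokes Lemma~\ref{lem:5.5}(b) as a black box to get $\|R(\nu)|Y\|\leq C\|\nu|D(\UU^\delta,M,\Yn)\|$, and then must bound $|\nu|(U_i)=\sum_{j:\,x_j\in U_i}|\lambda_j|\leq |\lambda_i|^+$ via the $\UU$-regularity result Lemma~\ref{lem:5.4}. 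Your route therefore bypasses both the decomposition space and Lemma~\ref{lem:5.4}, and yields the sharper explicit constant $C'=\|R|\AAm\|+\delta$; the paper's approach trades this for modularity (reusing Lemma~\ref{lem:5.5}(b) verbatim). For the last assertion the paper similarly uses the measure $\sum_i v(x_i)\mu(U_i)\epsilon_{x_i}$ and the pointwise version of the estimate~\eqref{eq:5.7}, then transfers pointwise convergence back to general $Y$ via the embedding $\Yn\hookrightarrow\ell^\infty_{1/r}$ of Theorem~\ref{thm:5.2}(d); your choice $\lambda_i=v(x_i)\in(L^\infty_{1/v})^\natural$ (using $\mu(U_i)\geq D$) achieves the same thing more directly. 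One small point: your kernel bound gives finiteness \emph{everywhere}, not just a.e., once you specialize to $Y=L^\infty_{1/v}$ (since $\int_X(|R|+\oscUGd)(x,y)m(x,y)\dy<\infty$ for every $x$), so you can match the paper's claim of pointwise convergence by invoking the embedding $\Yn\hookrightarrow\ell^\infty_{1/r}$ explicitly.
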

      \begin{proof}
	Denote by $\epsilon_x$ the Dirac measure in $x$. then the application of $R$ to the measure $\nu:=\sum_{i\in I} \lambda_i\epsilon_{x_i}$
	results in the function $x\mapsto \sum_{i\in I} \lambda_i R(x,x_i)$. It follows from Lemma \ref{lem:5.4} that
	{\cb \begin{equation*}
	  \| \sum_{i\in I} \lambda_i\epsilon_{x_i}|D(\UU^\delta,M,\Yn)\| = \|\sum_{i\in I}|\nu|(U_i)|\Yn\| \leq \|(|\lambda_i|^+)_{i\in I} |\Yn\| \leq C\|(\lambda_i)_{i\in I} |\Yn\|
	\end{equation*}
	where $|\lambda_i|^+ = \sum_{j\in i^\ast} |\lambda_i|$ and $i^\ast = \{j\in I,~ U_i\cap U_j \neq \emptyset\}$.} Thus, Lemma \ref{lem:5.5}(b) yields \eqref{eq:5.9}. 
	If the finite sequences are dense in $\Yn$ then clearly the convergence is in the norm of $Y$.
	
	For the pointwise convergence {\cb in $L^\infty_{1/v}$} observe that the space $Y = L^\infty_{1/v}$ {\cb satisfies (Y2) with the associated weight function $m$}. 
	For this choice it holds
	$\Yn = \ell^\infty_{1/r}$ where $r(i) = v(x_i)\mu(U_i)$ (Theorem \ref{thm:5.2}(c))\footnote{Note that $d_p(i) = (\tilde{v}(i)\mu(U_i))^{-1}$, 
	where $\tilde{v}(i) = \inf_{x\in U_i} v(x)$, cp. Theorem \ref{thm:5.2}. Furthermore, $m$ is an admissible weight and hence 
	$v(x) = m(x,z) \leq m(x,x_i)m(x_i,z) \leq \Cmu v(x_i)$ and similarly $v(x_i) \leq \Cmu v(x)$ for all $x\in U_i$ by \eqref{eq:5.3}. 
	Therefore, $1/r(i)$ and $d_p(i)$ are equivalent weights.}.
	The application of $|R|$ to the measure $\nu = \sum_{i\in I} v(x_i)\mu(U_i)\epsilon_{x_i}$ yields $\sum_{i\in I} |R(\cdot,x_i)|v(x_i)\mu(U_i)$.
	The estimations in \eqref{eq:5.7} are also valid pointwise until the second line, yielding
	\begin{equation*}
	  R(\nu)(x)\leq (\oscUGd + |R|)(\sum_{i\in I} |\nu|(U_i)\mu(U_i)^{-1}\chi_{U_i})(x).
	\end{equation*}
	For our specific choice of $\nu$ we have
	\begin{equation*}
	  |\nu|(U_i) = \sum_{j, x_j\in U_i\cap U_j} |v(x_j)|\mu(U_i) \leq \sum_{j\in i^\ast} |v(x_j)|\mu(U_i) < \infty,
	\end{equation*}
	since this is a finite sum. Moreover, for fixed $x$ also 
	\begin{equation*}
	  H(x) = \sum_{i\in I}|\nu|(U_i)\mu(U_i)^{-1}\chi_{U_i}(x)
	\end{equation*}
	is a finite sum and hence converges pointwise. We already know that $H$ is contained in $L^\infty_{1/v}$, {\cb since obviously $(v(x_i)\mu(U_i))_{i\in I} = (r(i))_{i\in I} \in \ell^\infty_{1/r}$}. 
	We conclude that the partial sums of\linebreak $\sum_{i\in I} |R(x,x_i)|v(x_i)\mu(U_i)$ are dominated by 
	\begin{align}
	  \lefteqn{\int_X (\oscUGd + |R|)(x,y)H(y)\dy = \int_X (\oscUGd + |R|)(x,y)v(y)H(y)v^{-1}(y)\dy}\nonumber\\
	  & \leq \int_X (\oscUGd + |R|)(x,y)m(x,y)\dy m(x,z)\sup_{y\in X}\left(|H(y)|v^{-1}(y)\right) \label{eq:5.10}\\
	  & \leq m(x,z)\|\oscUGd + |R||\AAm\|\|H|L^\infty_{1/v}\|\nonumber
	\end{align}
	where we used the symmetry and property (3.2) of admissible weights. Hence, the sum\linebreak  $\sum_{i\in I} |R(\cdot,x_i)|v(x_i)\mu(U_i)$ converges pointwise.
	By Theorem \ref{thm:5.2}(d) we have $\Yn \subset \ell^\infty_{1/r}$ {\cb with continuous embedding} for general $Y$. 
	Together with the results just proven this yields that the convergence is also pointwise in general.
      \end{proof}

      \begin{lemma}\label{lem:5.11}
	Suppose that the frame $\mathcal F$ has property $D[\delta,m]$ for some $\delta >0$ and let
	$\UU^\delta = (U_i)_{i\in I}$ be an associated moderate admissible covering of $X$ with 
	corresponding PU $(\phi_i)_{i\in I}$. If $F\in R(Y)$ then for some constant $D>0$ it holds\footnote{We state the lemma in this modified, stronger form, since our form of \eqref{eq:5.11} is in fact invoked in both Corollary \ref{cor:5.12} and Theorem \ref{thm:5.13}, cp. \cite{fora05}. Note that the proof only requires trivial modification.}
	{\cb\begin{equation}\label{eq:5.11}
	   \|(F(x_i))_{i\in I}|\Yf\| \leq D\|F|Y\|\quad \text{and}\quad \|\sum_{i\in I}|F(x_i)|\phi_i|Y\| \leq \sigma\|F|Y\|
	\end{equation}}
        where $\sigma := \max\{C_{m,\UU^\delta}\|R|\AAm\|,\|R|\AAm\|+\delta\}$ with $C_{m,\UU^\delta}$ being the constant in \eqref{eq:5.3}. {\cb In particular, 
        \begin{equation*}
          \|\sum_{i\in I} F(x_i)\chi_{U_i}|Y\| \leq D\|F|Y\|\quad \text{and}\quad \|\sum_{i\in I}F(x_i)\phi_i|Y\| \leq \sigma\|F|Y\|,
        \end{equation*}
        by solidity.}
      \end{lemma}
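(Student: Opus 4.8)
The plan is to deduce both halves of \eqref{eq:5.11} from the solidity of $Y$: I would dominate $\sum_{i\in I}|F(x_i)|\phi_i$ and $\sum_{i\in I}|F(x_i)|\chi_{U_i}$ pointwise by suitable kernels applied to $|F|$, and then estimate the $\AAm$-norms of those kernels, exactly as in the proof of Lemma~\ref{lem:5.5}(b). Since $F\in R(Y)$, the reproducing formula of Proposition~3.7 gives $F=R(F)$, so that for every $i\in I$
\[
  |F(x_i)|=\Big|\int_X R(x_i,y)F(y)\dy\Big|\le\int_X|R(x_i,y)|\,|F(y)|\dy .
\]
The engine of the proof is the pointwise inequality: whenever $x$ and $x_i$ lie in a common $U_i$ --- in particular whenever $\phi_i(x)\neq0$, and a fortiori whenever $\chi_{U_i}(x)\neq0$ --- one has
\[
  |R(x_i,y)|\le |R(x,y)|+\oscUGd(y,x)\qquad\text{for all }y\in X .
\]
I would prove this by taking $z=x_i$ in $\oscUGd(y,x)=\sup_{z\in Q_x}|R(y,x)-\Gamma(x,z)R(y,z)|$, which is legitimate because $x\in U_i$ forces $U_i\subseteq Q_x$ and hence $x_i\in Q_x$; the triangle inequality together with $|\Gamma|=1$ then gives $|R(y,x_i)|\le|R(y,x)|+\oscUGd(y,x)$, and the symmetry $|R(a,b)|=|R(b,a)|$ (which holds since $S^{-1}$ is self-adjoint) converts this into the stated form. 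Morally, this transfers the oscillation from the sampled point $x_i$ onto the running variable $x$.

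For the second estimate in \eqref{eq:5.11}, interchanging the pointwise-finite sum with the integral yields $\sum_{i\in I}|F(x_i)|\phi_i(x)\le K(|F|)(x)$ with $K(x,y):=\sum_{i\in I}\phi_i(x)|R(x_i,y)|$, so by solidity it suffices to bound $\|K|\AAm\|$ by $\sigma$. For the first of the two defining suprema of the $\AAm$-norm I would use that $\phi_i(x)\neq0$ forces $x,x_i\in U_i$, hence $m(x,y)\le m(x,x_i)m(x_i,y)\le\Cmu m(x_i,y)$ by admissibility of $m$ and \eqref{eq:5.3}; integrating, bounding by $\Cmu\|R|\AAm\|$, and summing with $\sum_i\phi_i\equiv1$ gives $\esssup_x\int_X K(x,y)m(x,y)\dy\le\Cmu\|R|\AAm\|$. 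For the second supremum I would insert the pointwise inequality (valid on $\supp\phi_i$) and again use $\sum_i\phi_i\equiv1$, obtaining, for a.e.\ $y$,
\[
  \int_X K(x,y)m(x,y)\dx\le\int_X\big(|R(x,y)|+\oscUGd(y,x)\big)m(x,y)\dx\le\|R|\AAm\|+\|\oscUGd|\AAm\|<\|R|\AAm\|+\delta .
\]
Taking the maximum of the two bounds yields $\|K|\AAm\|\le\max\{\Cmu\|R|\AAm\|,\|R|\AAm\|+\delta\}=\sigma$, hence $\|\sum_{i\in I}|F(x_i)|\phi_i|Y\|\le\sigma\|F|Y\|$.

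The first estimate in \eqref{eq:5.11} is obtained by running the same argument with $\chi_{U_i}$ in place of $\phi_i$: the only difference is that the partition-of-unity identity $\sum_i\phi_i\equiv1$ must be replaced by the finite-overlap bound $\sum_i\chi_{U_i}\le N$ from \eqref{eq:5.1}, which costs a factor $N$ in both suprema, so $D:=N\sigma$ works. The two ``in particular'' statements then follow at once from solidity, since $\big|\sum_iF(x_i)g_i\big|\le\sum_i|F(x_i)|\,|g_i|$ pointwise for $g_i\in\{\chi_{U_i},\phi_i\}$. I expect the only genuinely delicate point to be the justification of the pointwise inequality --- in particular checking that $x_i\in Q_x$ really does hold (this is exactly where the hypothesis that $x$ and $x_i$ share a covering set is used) and that the phase factor and the modulus-symmetry of $R$ combine correctly; once that is in place, the remainder is the same kernel bookkeeping already carried out for Lemma~\ref{lem:5.5}(b).
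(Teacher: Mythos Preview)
Your proposal is correct and follows essentially the same approach as the paper: both define the kernel $K(x,y)=\sum_i g_i(x)|R(x_i,y)|$ (with $g_i\in\{\phi_i,\chi_{U_i}\}$), dominate the sampled sum by $K(|F|)$ using $F=R(F)$, and bound $\|K|\AAm\|$ via the weight inequality $m(x,y)\le\Cmu m(x_i,y)$ for the $y$-integral and the pointwise estimate $|R(x_i,y)|\le|R(x,y)|+\oscUGd^\ast(x,y)$ for the $x$-integral. The only cosmetic difference is that the paper treats the $\chi_{U_i}$ case first and then remarks that the $\phi_i$ case is analogous with $N$ dropping out, whereas you do the $\phi_i$ case first; the arguments are otherwise identical.
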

      \begin{proof}
        {\cb We prove \eqref{eq:5.11}, the last part of the lemma then follows by solidity of $Y$ 
        or more specifically $\|F|Y\| = \||F||Y\|$.} Since $F\in R(Y)$ it holds $F=R(F)$ by
        Proposition 3.7 and Corollary \ref{cor:5.6}. This yields
        {\cb \begin{align*}
          H(x) & := \sum_{i\in I} |F(x_i)|\chi_{U_i}(x) = \sum_{i\in I} |R(F)(x_i)|\chi_{U_i}(x)
          = \sum_{i\in I_x}|\int_X R(x_i,y)F(y)\chi_{U_i}(x)\dy|\\
          & \leq \int_X \sum_{i\in I_x}|R(x_i,y)||F(y)|\chi_{U_i}(x)\dy.
        \end{align*}}
        Since the sum is finite over the index set $I_x = \{i\in I, x\in U_i\}$ the interchange of 
        summation and integration is justified. Define
         {\cb
	 \begin{equation}\label{eq:5.12}
	   K(x,y) := \sum_{i\in I} |R(x_i,y)|\chi_{U_i}(x)
         \end{equation}
         we obtain $H\leq K(|F|)$}. We claim that $K\in \AAm$. For the integral with respect to $y$ we 
        obtain
        \begin{equation*}
          \int_X |K(x,y)|m(x,y)\dy \leq \sum_{i\in I_x} \chi_{U_i}(x)m(x,x_i)\int_X |R(x_i,y)|m(x_i,y)\dy \leq NC_{m,\UU^\delta}\|R|\AAm\|
        \end{equation*}
        where {\cb we used property (3.2) of $m$ and} $N$ is the constant from \eqref{eq:5.1}. 
        For an estimation of the integral with respect to $x$ observe first that 
        {\cb
        \begin{align*}
          |R(x_i,y)| & \leq |\overline{\Gamma}(x,x_i)R(x_i,y) - R(x,y)| + |R(x,y)|\\
          & = |\Gamma(x,x_i)R(y,x_i) - R(y,x)| + |R(x,y)|\\
          & \leq \oscUGd(y,x) + |R(x,y)| = \oscUGd^\ast(x,y) + |R(x,y)|,
        \end{align*} }
        %with\footnote{Note that $\oscUGd(y,x)$ is not necessarily equal to $\oscUGd^\ast(x,y)$ due to the appearance of $Q_x$, resp. $Q_y$ in the definition of the oscillation kernel. This is 
        %not accounted for in \cite{fora05}.} $\oscUGd^\ast(x,y) = \oscUGd(y,x)$, for all $x\in Q_{x_i} = \cup_{i,x_i\in U_i} U_i$ by definition of $\oscUG$. 
        for all $x\in Q_{x_i} = \bigcup_{j, U_i\cap U_j\neq \emptyset} U_j$. By
        Fubini's theorem we obtain
        \begin{align*}
          \lefteqn{\int_X |K(x,y)|m(x,y)\dx = \int_X \sum_{i\in I} \chi_{U_i}(x)|R(x_i,y)|m(x,y)\dx}\\
          & \leq \sum_{i\in I} \int_{U_i} (\oscUGd^\ast(x,y) + |R(x,y)|)m(x,y)\dx \leq N\int_X (\oscUGd^\ast(x,y) + |R(x,y)|)m(x,y)\dx\\
          &\leq N(\|\oscUGd^\ast|\AAm\| + \|R|\AAm\|) < N(\|R|\AAm\|+\delta),
        \end{align*}
        since clearly $\|\oscUGd^\ast|\AAm\| = \|\oscUGd|\AAm\|$.
        This proves $K\in \AAm$ and we finally obtain 
        {\cb
        \begin{equation*}
          \|(F(x_i))_{i\in I}|\Yf\| = \|\sum_{i\in I} |F(x_i)|\chi_{U_i}|Y\| \leq \|K(|F|)|Y\| \leq \|K|\AAm\|\||F||Y\|=\|K|\AAm\|\|F|Y\|.
        \end{equation*}}
        A similar analysis shows also the second inequality in \eqref{eq:5.11}. The constant $N$ from \eqref{eq:5.1} does not enter the number $\sigma$ since we replace the characteristic
        functions by a partition of unity.
      \end{proof}
      
      \begin{corollary}\label{cor:5.12}
        Suppose the frame $\mathcal F$ possesses property $D[\delta,m]$ for some $\delta >0$. If $f\in\CoY$ then it holds $\|(Vf(x_i))_{i\in I}|\Yf\| \leq D\|f|\CoY\|$, {\cb where $D$ is the constant in \eqref{eq:5.11}}.
      \end{corollary}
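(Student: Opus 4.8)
The plan is to deduce this immediately from Lemma~\ref{lem:5.11} once we identify $Vf$ as an element of $R(Y)$. First I would recall that, by the very definition of the coorbit space, $f\in\CoY$ means precisely that $Vf\in Y$, and that the coorbit norm is $\|f|\CoY\| = \|Vf|Y\|$. Hence the quantity to be bounded, $\|(Vf(x_i))_{i\in I}|\Yf\|$, is exactly the $\Yf$-norm of the sampled values of the function $F:=Vf\in Y$ at the chosen points $x_i\in U_i$.

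Next I would invoke the correspondence principle (Proposition 3.7), together with Corollary~\ref{cor:5.6}, to conclude that $Vf$ is reproduced by the kernel $R$, that is $R(Vf)=Vf$; here Corollary~\ref{cor:5.6} is what guarantees $R(Y)\subset L^\infty_{1/v}$, so that the reproducing identity is meaningful for $F=Vf$. In particular $Vf = R(Vf)\in R(Y)$, so $F=Vf$ satisfies the hypothesis $F\in R(Y)$ required by Lemma~\ref{lem:5.11}.

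Finally, applying the first inequality in \eqref{eq:5.11} to $F=Vf$ gives
\begin{equation*}
  \|(Vf(x_i))_{i\in I}|\Yf\| \leq D\|Vf|Y\| = D\|f|\CoY\|,
\end{equation*}
with $D$ the constant appearing in \eqref{eq:5.11}, which is the claimed estimate.

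I do not expect any genuine obstacle here: the entire analytic content sits in Lemma~\ref{lem:5.11}, and this corollary merely translates that estimate from functions $F\in R(Y)$ to the analysis coefficients $Vf(x_i)$ of elements $f\in\CoY$. The one point deserving a moment's care is that $Vf$ lies in $R(Y)$ and not merely in $Y$, which is precisely why the reproducing property (Proposition 3.7 and Corollary~\ref{cor:5.6}) must be cited before applying Lemma~\ref{lem:5.11}.
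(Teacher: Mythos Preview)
Your proposal is correct and follows essentially the same line as the paper's proof: invoke Proposition~3.7 (with Corollary~\ref{cor:5.6} supplying its hypothesis) to place $Vf$ in $R(Y)$, then apply Lemma~\ref{lem:5.11} with $F=Vf$ and use $\|f|\CoY\|=\|Vf|Y\|$.
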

      \begin{proof}
        By Proposition 3.7 it holds $Vf\in R(Y)$. {\cb By Lemma \ref{lem:5.11}} we conclude 
        $\|(Vf(x_i))_{i\in I}|\Yf\| \leq D\|Vf|Y\| = D\|f|\CoY\|$.
      \end{proof}
      
      As already announced we need to show that $\UP$ is invertible if $\delta$ is small enough.

      \begin{theorem}\label{thm:5.13}
        Suppose the frame $\mathcal F$ possesses property $D[\delta,m]$ for some $\delta >0$ {\cb 
        and let $\UU^\delta = (U_i)_{i\in I}$ be an associated moderate admissible covering of $X$
        with corresponding PU $(\phi_i)_{i\in I}$. Let $\UP$ be as in \eqref{eq:U}}. Then it holds
        \begin{equation}\label{eq:5.13}
          \|(\operatorname{Id} - \UP)|R(Y)\rightarrow R(Y)\| \leq \delta(\|R|\AAm\|+\sigma),
        \end{equation}
        where $\sigma$ is the constant from Lemma \ref{lem:5.11}. Consequently, $\UP$ is bounded and 
        if the right hand side of \eqref{eq:5.13} is less or equal to $1$ the $\UP$ is boundedly invertible on $R(Y)$.
      \end{theorem}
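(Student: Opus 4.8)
The plan is to prove the operator‑norm estimate \eqref{eq:5.13}; the boundedness and invertibility of $\UP$ then follow easily. Let $\Gamma$ be the phase function attached to $\UU^\delta$ in Definition \ref{def:5.2}, so that $|\Gamma|\equiv 1$ and $\|\oscUGd|\AAm\|<\delta$, and write $\oscUGd^\ast(x,y):=\oscUGd(y,x)$ for the transposed kernel (note $\|\oscUGd^\ast|\AAm\|=\|\oscUGd|\AAm\|$ since $m$ is symmetric). Fix $F\in R(Y)$. By Proposition 3.7 and Corollary \ref{cor:5.6} we have $F=R(F)$, so on $R(Y)$ the operator $\operatorname{Id}-\UP$ agrees with $R-\UP$. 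Since $c_i=\int_X\phi_i(y)\dy$ and $\sum_{i\in I}\phi_i\equiv 1$, I would insert the partition of unity into $R(F)(x)=\int_X R(x,y)F(y)\dy$ and into \eqref{eq:U} and rearrange termwise (both $R(F)(x)$ and $\UP F(x)$ are well defined, by Corollary \ref{cor:5.6} and Lemma \ref{lem:5.10} respectively, and for fixed $y$ only finitely many $\phi_i(y)$ are nonzero), obtaining
\begin{equation*}
  (\operatorname{Id}-\UP)F(x)=\sum_{i\in I}\int_X\phi_i(y)\bigl(R(x,y)F(y)-F(x_i)R(x,x_i)\bigr)\dy .
\end{equation*}

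The heart of the matter is to split the integrand, using $\Gamma$, into two pieces each controlled by the oscillation kernel. With the unimodular factor $\gamma_i(y):=\overline{\Gamma(y,x_i)}$ I would write
\begin{equation*}
  R(x,y)F(y)-F(x_i)R(x,x_i)=R(x,y)\bigl(F(y)-\gamma_i(y)F(x_i)\bigr)+F(x_i)\bigl(\gamma_i(y)R(x,y)-R(x,x_i)\bigr).
\end{equation*}
For $y\in\supp\phi_i\subset U_i$ one has $x_i\in U_i\subset Q_y$, so $x_i$ is an admissible base point for $\oscUGd(\,\cdot\,,y)$. Using $|\Gamma|\equiv 1$ and the Hermitian symmetry $\overline{R(a,b)}=R(b,a)$ (immediate from $R(x,y)=\langle S^{-1}\psi_x,\psi_y\rangle$ and self‑adjointness of $S^{-1}$), multiplying the first bracket through by $\Gamma(y,x_i)$ gives
\begin{equation*}
  |\gamma_i(y)R(x,y)-R(x,x_i)|=|R(x,y)-\Gamma(y,x_i)R(x,x_i)|\le\oscUGd(x,y),
\end{equation*}
while applying $F=R(F)$ once more and taking conjugates in the same way,
\begin{equation*}
  |F(y)-\gamma_i(y)F(x_i)|\le\int_X|R(y,z)-\gamma_i(y)R(x_i,z)|\,|F(z)|\,d\mu(z)\le\int_X\oscUGd^\ast(y,z)\,|F(z)|\,d\mu(z)=\oscUGd^\ast(|F|)(y).
\end{equation*}
Substituting these, collapsing the $i$‑sums with $\sum_i\phi_i\equiv 1$ (Tonelli), and writing $G:=\sum_{i\in I}|F(x_i)|\phi_i$, this yields the pointwise bound
\begin{equation*}
  |(\operatorname{Id}-\UP)F(x)|\le |R|\bigl(\oscUGd^\ast(|F|)\bigr)(x)+\oscUGd(G)(x).
\end{equation*}

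It then remains to take $Y$‑norms. The function $g:=\oscUGd^\ast(|F|)$ lies in $Y$ with $\|g|Y\|\le\|\oscUGd^\ast|\AAm\|\,\|F|Y\|<\delta\|F|Y\|$ by solidity, hence $\||R|(g)|Y\|\le\|R|\AAm\|\,\|g|Y\|<\delta\,\|R|\AAm\|\,\|F|Y\|$; and by Lemma \ref{lem:5.11} we have $\|G|Y\|\le\sigma\|F|Y\|$, so $\|\oscUGd(G)|Y\|\le\|\oscUGd|\AAm\|\,\|G|Y\|<\delta\sigma\,\|F|Y\|$. Adding, $\|(\operatorname{Id}-\UP)F|Y\|\le\delta(\|R|\AAm\|+\sigma)\|F|Y\|$, which is \eqref{eq:5.13} once one records that $(\operatorname{Id}-\UP)F=F-\UP F$ again lies in $R(Y)$: indeed $\UP F=R(\nu)$ with $\nu=\sum_i c_iF(x_i)\epsilon_{x_i}$, which is in $D(\UU^\delta,M,\Yn)$ since $(c_iF(x_i))_{i\in I}\in\Yn$ by Lemma \ref{lem:5.11} (using $c_i\le\mu(U_i)$), so $\UP F\in Y$ by Lemma \ref{lem:5.5}(b), and it is a fixed point of $R$ by the reproducing property $R(R(\nu))=R(\nu)$. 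In particular $\UP=\operatorname{Id}-(\operatorname{Id}-\UP)$ is bounded on $R(Y)$; and since \eqref{eq:5.4} is strict, so are all the estimates above, whence $\|\operatorname{Id}-\UP|R(Y)\to R(Y)\|<\delta(\|R|\AAm\|+\sigma)\le 1$ whenever the right‑hand side of \eqref{eq:5.13} is at most $1$, and then the Neumann series $\sum_{k\ge 0}(\operatorname{Id}-\UP)^k$ converges in $\mathcal B(R(Y))$ to the bounded inverse of $\UP$. I expect the main obstacle to be the phase bookkeeping in the split: the factors $\Gamma,\overline\Gamma$ must be placed so that each of the differences $\gamma_i(y)R(x,y)-R(x,x_i)$ and $R(y,z)-\gamma_i(y)R(x_i,z)$ reproduces \emph{exactly} the structure $R(\,\cdot\,,y)-\Gamma(y,w)R(\,\cdot\,,w)$ of Definition \ref{def:5.2} (this is precisely where $|\Gamma|\equiv 1$ and the Hermitian symmetry of $R$ enter), and one must check $x_i\in Q_y$ on $\supp\phi_i$; the remaining summation and integration interchanges are routine given the finite overlap property and Lemma \ref{lem:5.10}.
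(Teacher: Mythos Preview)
Your proof is correct and follows essentially the same route as the paper: your algebraic split of $R(x,y)F(y)-F(x_i)R(x,x_i)$ into the two $\Gamma$-adjusted pieces is exactly the decomposition the paper obtains by introducing the auxiliary operator $\SP F := R\bigl(\sum_i\overline{\Gamma(\cdot,x_i)}F(x_i)\phi_i\bigr)$ and writing $\|F-\UP F\|\le\|F-\SP F\|+\|\SP F-\UP F\|$, and the two resulting bounds coincide with yours line by line. The only minor difference is in the verification that $\UP F\in R(Y)$: the paper argues via Lemma~3.6(b,c) (pointwise convergence $\Rightarrow$ weak-$\ast$ convergence to some $g\in(\mathcal H^1_v)\urcorner$ with $Wg=\UP F$), which is slightly more self-contained than your appeal to the reproducing identity $R(R(\nu))=R(\nu)$, the latter tacitly needing a Fubini-type interchange of the $i$-sum with the $y$-integral.
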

      \begin{proof}
        Let us first show the implicit assertion that $F\in R(Y)$ implies $\UP(F)\in R(Y)$. {\cb By Lemma \ref{lem:5.11} $(F(x_i))_{i\in I}\in \Yf$ which implies} $(c_iF(x_i))_{i\in I}\in \Yn$.
        It follows from Lemma \ref{lem:5.10} that $\sum_{i\in I} c_iF(x_i)R(\cdot,x_i)$ converges
        pointwise to a function $G=\UP(F)\in Y$. The pointwise convergence implies the 
        weak-$\ast$ convergence of $\sum_{i\in I} c_iF(x_i)\psi_{x_i}$ to an element $g$ of $(\mathcal H^1_v)\urcorner$ by Lemma 3.6(b) which is then automatically contained 
        in $\CoYt$ since $G\in Y$. From Lemma 3.6(c) follows that $G = Wg = R(Wg)$ and hence 
        $\UP(F)\in R(Y)$, {\cb implying $R(\UP(F))=\UP(F)$}.
        
      {\cb Let us now introduce the auxiliary operator\footnote{The rest of the proof has to be
      adjusted to take into account the modified definition of $\oscUG$.}
      \begin{equation*}
        \SP F(x) := R(\sum_{i\in I} \overline{\Gamma(\cdot,x_i)}F(x_i) \phi_i )(x),
      \end{equation*}
      where $\Gamma$ is the phase function from the definition of $\oscUG$.
      By the triangle inequality, 
      \begin{equation}\label{eq:T}\tag{T}
        \|F-\UP F|Y\|  \leq \|F-\SP F|Y\|  + \|\SP F-\UP F|Y\|.
      \end{equation}
      We now estimate both terms on the RHS separately. Assuming $F\in R(Y)$ implies $F = R(F)$ by Proposition 3.7 and Corollary \ref{cor:5.6}. This yields
      \begin{equation*}
      \begin{split}
        \|F-\SP  F|Y\| & = \|R(F - \sum_{i\in I} \overline{\Gamma(\cdot,x_i)}F(x_i)\phi_i)|Y\| \\ 
        & \leq \|R|\AAm\| \|\sum_{i\in I}\left(F-\overline{\Gamma(\cdot,x_i)}F(x_i)\right)\phi_i|Y\|.
      \end{split}
      \end{equation*}
      In order to estimate $\|\left(F-\overline{\Gamma(\cdot,x_i)}F(x_i)\right)\phi_i|Y\| $, examine
      \begin{equation*}
      \begin{split}
        \lefteqn{|\sum_{i\in I} \left(F(x)-\overline{\Gamma(x,x_i)}F(x_i)\right)\phi_i(x)|= |\sum_{i\in I} \left(R(F)(x)-\overline{\Gamma(x,x_i)}R(F)(x_i)\right)\phi_i(x)|}\\
        &= |\sum_{i\in I} \int_{X} F(y)\left(R(x,y)-\overline{\Gamma(x,x_i)}R(x_i,y)\right)~d\mu(y)\phi_i(x)|\\
        &\leq \sum_{i\in I} \int_{X} |F(y)||R(y,x)-\Gamma(x,x_i)R(y,x_i)|~d\mu(y)\phi_i(x)\\
        &\leq \sum_{i\in I} \int_{X} |F(y)| \oscUGd(y,x)~d\mu(y)\phi_i(x)\\   
        &= \sum_{i\in I} \oscUGd^\ast(|F|)(x)\phi_i(x) =  \oscUGd^\ast(|F|)(x).
      \end{split}
      \end{equation*}
      %with $\oscUGd^\ast(x,y) = \oscUGd(y,x)$ as in Lemma \ref{lem:5.11}. 
      In the derivations above, we used $R(x,y) = \overline{R(y,x)}$ and the property $\supp(\phi_i)\subseteq U_i^\delta \in \mathcal{U}$ of the PU $\Phi=(\phi_i)_{i\in I}$. Furthermore, the interchange of summation and integration in the last line is allowed since by \eqref{eq:5.1} the sum is finite for any fixed $x\in X$.
      
      We obtain 
      \begin{equation}\label{eq:5.14}
        \|F-\SP  F|Y\|  \leq \|R|\AAm\| \|\oscUGd|\AAm\| \|F|Y\|,
      \end{equation}
       since $\|\oscUGd^\ast|\AAm\| = \|\oscUGd|\AAm\|$. 
      
      Now, we estimate $\|\SP F-\UP F|Y\| $. Note that
      \begin{equation*}
       \begin{split}
        \lefteqn{|\SP (F)(x) - \UP (F)(x)|}\\
        & = \left|\sum_{i\in I} \int_X F(x_i)\phi_i(y) \left(\overline{\Gamma(y,x_i)}R(x,y)-R(x,x_i)\right)~d\mu(y)\right|\\
        & \leq \sum_{i\in I} \int_X |F(x_i)|\phi_i(y) \left|R(x,y)-\Gamma(y,x_i)R(x,x_i)\right|~d\mu(y)\\
        & \leq \sum_{i\in I} \int_X |F(x_i)|\phi_i(y) \text{osc}_{\mathcal U^\delta,\Gamma}(x,y)~d\mu(y),
       \end{split}       
      \end{equation*}
      where we used $\supp(\phi_i)\subseteq U_i \in \mathcal{U}^\delta$ once more.
      
      Define $H(y):= \sum_{i\in I} |F(x_i)|\phi_i(y)$, then by Lemma \ref{lem:5.11} and solidity of $Y$:
      \begin{equation}\label{eq:5.15}
      \begin{split}
	\|\SP F-\UP F|Y\|  & \leq \|\text{osc}_{\mathcal U^\delta,\Gamma}|\AAm\| \|H|Y\| \\
	& \leq \sigma \|\text{osc}_{\mathcal U^\delta,\Gamma}|\AAm\| \|F|Y\|.
      \end{split}
      \end{equation}
      Insert \eqref{eq:5.14} and \eqref{eq:5.15} into \eqref{eq:T} and use $\|\text{osc}_{\mathcal U^\delta,\Gamma}|\AAm\| <\delta$ to complete the proof.}
      \end{proof}

      Now we have all the ingredients to prove Theorem \ref{thm:5.7}.
      
      \begin{proof}[Proof of Theorem \ref{thm:5.7}]
        The condition on $\delta$ implies by Theorem \ref{thm:5.13} that $\UP$ is invertible on $R(Y)$. Assuming $f\in \CoYt$ means $Wf\in R(Y)$ by
        {\cb Lemma 3.6(c)} and\footnote{The original proof in \cite{fora05} refers to Proposition 3.7(a) and Corollary \ref{cor:5.6}, which seems does not seem to be required here. However, Proposition 3.7(b) and Corollary \ref{cor:5.6} also imply $Wf\in R(Y)$ and will be used again in this context, see \eqref{eq:5.16}.} 
        {\cb the definition of $\CoYt$}. We conclude
        \begin{equation*}
          %Wf(x) = \UP\UP^{-1}Wf(x) = \sum_{i\in I} c_i \langle \UP^{-1}Wf,\phi_i\rangle R(x,x_i) = \sum_{i\in I} c_i \langle \UP^{-1}Wf,\phi_i\rangle W\psi_{x_i}(x).
          Wf(x) = \UP\UP^{-1}Wf(x) = \sum_{i\in I} c_i (\UP^{-1}Wf)(x_i) R(x,x_i) = \sum_{i\in I} c_i (\UP^{-1}Wf)(x_i) W\psi_{x_i}(x).
        \end{equation*}
        Setting $\lambda_i(f) := c_i (\UP^{-1}Wf)(x_i)$ we obtain with Proposition 3.7{\cb (b) and Corollary 5.6}
        \begin{equation}\label{eq:5.16}
          f = \sum_{i\in I} \lambda_i(f)\psi_{x_i}.
        \end{equation}
        Since $c_i \leq \mu(U_i)$ we obtain\footnote{Recall that $\vvvert \cdot|B\vvvert$ denotes the 
        operator norm in $\mathcal{B}(B)$.} with Lemma \ref{lem:5.11}
        \begin{align*}
          \|(\lambda_i(f))_{i\in I}|\Yn\| & \leq \|\left((\UP^{-1}Wf)(x_i)\right)_{i\in I}|\Yf\| \leq C\|\UP^{-1}Wf|Y\|\\
          & \leq C\vvvert \UP^{-1}|R(Y)\vvvert \|f|\CoYt\|,
          \end{align*}
        {\cb where we used $Wf\in R(Y)$ again.}
        Conversely, suppose that $(\lambda_i)_{i\in I}\in\Yn$ and form the function 
        \begin{equation*}
          H(x) := \sum_{i\in I} \lambda_i R(x,x_i) = \sum_{i\in I} \lambda_i W\psi_{x_i}(x).          
        \end{equation*}
        Since $\Yn {\cb \subset \ell^\infty_{1/r}}$ (Theorem \ref{thm:5.2}(d)) the sum converges pointwise to a function\footnote{Use Theorem \ref{thm:5.2}(c).} in $L^\infty_{1/v}$ by Lemma \ref{lem:5.10}. By Lemma 3.6(b) the pointwise convergence of the partial sums of $H$ implies the weak-$\ast$ convergence
        in $(\mathcal H^1_v)\urcorner$ of $f:=\sum_{i\in I} \lambda_i \psi_{x_i}$. Hence, $f$ is an element of $(\mathcal H^1_v)\urcorner$ and by Lemma \ref{lem:5.10} is therefore contained in 
        $\CoYt$. Also from Lemma \ref{eq:5.10} follows
        \begin{equation*}
          \|f|\CoYt\| = \|H|Y\| \leq C'\|(\lambda_i)_{i\in I}|\Yn\|
        \end{equation*}
        and the convergence of the sum representing $f$ is in the norm of $\CoYt$ if the finite sequences are dense in $\Yn$. This proves that $\mathcal F_d = (\psi_{x_i})_{i\in I}$ is an 
        atomic decomposition of $\CoYt$.
        
        Now suppose $f\in \CoY$ and let $F:=Vf\in R(Y)$. We obtain 
        \begin{equation}\label{eq:5.17}
          Vf = \UP^{-1}\UP Vf = \UP^{-1}\left(\sum_{i\in I} c_i Vf(x_i)W\psi_{x_i}\right).
        \end{equation}
        By the correspondence principle (Proposition 3.7) this implies
        \begin{equation*}
          f = {\cb V^{-1}}\UP^{-1}\left(\sum_{i\in I} c_i Vf(x_i)R(\cdot,x_i)\right).
        \end{equation*}
        This is a reconstruction of $f$ from the coefficients $Vf(x_i) = \langle f,\psi_{x_i}\rangle$, $i\in I$, and the reconstruction 
        operator $T~:~\Yf\rightarrow \CoY$, $T = V^{-1}\UP^{-1}J$ is bounded as the composition of bounded operators.
        Note that the operator\footnote{Note $(\lambda_i)_{i\in I}\in \Yf$ implies $(c_i\lambda_i)_{i\in I}\in\Yn$.} 
        $J((\lambda_i)_{i\in I})(x) := \sum_{i\in I}c_i\lambda_iR(x,x_i)$ is bounded by {\cb Lemma 5.10}. Setting $Y=L^{\infty}_{1/v}$ shows that 
        any element of $\text{Co}L^{\infty}_{1/v} = {\cb (\mathcal K^1_v)\urcorner}$ can be reconstructed in this way. 
        Now, if for $f\in (\mathcal K^1_v)\urcorner$ it holds $(\langle f,\psi_{x_i}\rangle)_{i\in I}\in \Yf$ then the series $\sum_{i\in I}\langle f,\psi_{x_i}\rangle\phi_i$ 
        converges to an element of $Y$ since $\phi_i\leq \chi_{U_i}$. By bounded invertibility of $\UP$ on $R(Y)$ the right hand side of \eqref{eq:5.17} defines an
        element in $Y$, hence $f\in\CoY$.
        
        Using \eqref{eq:5.17}, the norm equivalence follows from 
        \begin{align*}
          \|f|\CoY\| & = \|Vf|Y\| \leq \vvvert \UP^{-1}|R(Y)\vvvert \|\sum_{i\in I} c_i Vf(x_i)R(\cdot,x_i)|Y\|\\
          & \leq C\vvvert \UP^{-1}\vvvert \| (c_i Vf(x_i))_{i\in I}|\Yn\| \leq C\vvvert \UP^{-1}\vvvert \| (Vf(x_i))_{i\in I}|\Yf\| \leq C'\|f|\CoY\|.
        \end{align*}
        Hereby, we used Lemma \ref{lem:5.10}, $c_i\leq \mu(U_i)$ and Corollary \ref{cor:5.12}. Hence, we showed that $\mathcal F_d$ is a Banach frame for $\CoY$.
        
        In order to prove the existence of a dual frame let $E_i := c_i\UP^{-1}(W\psi_{x_i})\in R(L^1_v)$ and denote $e_i\in \mathcal H^1_v$ 
        the unique vector such that $E_i = V(e_i)$. If the finite sequences are dense in $\Yf$ then we may conclude from \eqref{eq:5.17} by a
        standard argument (see also\footnote{The original reference here is to Lemma 4, which does not exist in \cite{gr01}.} \cite[Lemma 5.3.2]{gr01})
        that $f=\sum_{i\in I} \langle f,\psi_{x_i}\rangle e_i$ with {\cb unconditional} norm convergence. This proves (c).
        
        We claim that 
        \begin{equation*}
          \lambda_i(f) = \langle f,e_i\rangle
        \end{equation*}
        yielding together with \eqref{eq:5.16} $f=\sum_{i\in I} \langle f,e_i\rangle\psi_{x_i}$ (with weak-$\ast$ convergence in general, and if the 
        finite sequences are dense in $\Yn$ with norm convergence). 
        
        If $F\in R(Y)$ then $F(x) = R(F)(x) = \langle F,W\psi_{x}\rangle$, {\cb by self-adjointness of $S^{-1}$}. 
        A simple calculation shows\footnote{We have $\langle R(\cdot,x_i),W\psi_{x}\rangle = \overline{\langle W\psi_{x},R(\cdot,x_i)\rangle} = \overline{W\psi_{x}(x_i)}$ and 
        $\overline{W\psi_{x}(x_i)}F(x_i) = \overline{W\psi_{x}(x_i)}R(F)(x_i) = \langle F, \UP W\psi_{x}\rangle$.}
        \begin{equation*}
          \langle \UP F,W\psi_{x}\rangle = \sum_{i\in I} c_i F(x_i) \langle R(\cdot,x_i),W\psi_{x}\rangle = \sum_{i\in I} c_i F(x_i) \overline{W\psi_{x}(x_i)} = \langle F,\UP W\psi_{x}\rangle.
        \end{equation*}
        Hence, the same relation applies to $\UP^{-1} = \sum_{i\in I} \sum_{n=0}^{\infty} (\operatorname{Id} - \UP)^n$ and we obtain
        \begin{align*}
          \lambda_i(f) & = c_i(\UP^{-1}Wf)(x_i) = c_i\langle \UP^{-1}Wf, W\psi_{x}\rangle = \langle Wf, c_i\UP^{-1}W\psi_{x}\rangle\\
          & = \langle Wf,Ve_i\rangle = \langle f,W^\ast V e_i \rangle = \langle f,e_i\rangle.
        \end{align*}
        {\cb Note that $W^\ast = V^{-1}$ on $\mathcal H^1_v \subseteq \mathcal H$.} By Lemma \ref{lem:5.10} and \ref{lem:5.11} we have the norm estimate
        \begin{align*}
          \|f|\CoY\| & = \|\sum_{i\in I} \langle f,e_i\rangle R(\cdot,x_i)|Y\| \leq C\|(\langle f,e_i\rangle)_{i\in I}|\Yn\| \leq C\|(\UP^{-1}Wf(x_i))_{i\in I}|\Yf\|\\
          &\leq C'\|\UP^{-1}Wf|Y\|\leq C' \vvvert\UP^{-1}\vvvert \|f|\CoY\|.
        \end{align*}
        This shows (a) and thus we complete the proof of Theorem \ref{thm:5.7}. Theorem \ref{thm:5.8} is proved in the same way by exchanging the roles of $V$ and $W$.
      \end{proof}
      
      \begin{remark}\label{rem:5.1}
        Using different approximation operators (compare{\cb \cite{gr91}}\footnote{The original reference here is to \cite{gr01} which does not seem to treat the discretization problem in detail.}) 
        one can prove that under some weaker condition on $\delta$ one may discretize the continuous frame in order to obtain only atomic decompositions or only Banach frames with no corresponding 
        results about (discrete) dual frames. In particular, if $\delta \leq 1$ then with the procedure of Theorem \ref{thm:5.7} one obtains atomic decompositions and in $\delta \leq \|R|\AAm\|^{-1}$
        one obtains Banach frames.
      \end{remark}
      
      Let us also add some comments about the Hilbert spaces situation which was the original question of Ali, Antoine and Gazeau. Here, we need to consider $Y=L^2$ since 
      $\text{Co}L^2 = \widetilde{\text{Co}}L^2 = \mathcal H$. By {\cb Theorem} \ref{thm:5.2}(c) the corresponding sequence space is 
      $\Yf = \ell^2_{\sqrt{a}}(I) = \ell^2(I,a)$ where $a_i =\mu(U_i)$. In order to be consistent with the usual notation of a (discrete) frame it seems suitable
      to renormalize the frame, i.e. under the conditions stated in Theorem \ref{thm:5.7} (according to Remark \ref{rem:5.1} it is only necessary to have $\delta\leq \|R|\AAm\|^{-1}$)
      it holds 
      \begin{equation*}
        C_1\|f|\mathcal H\|{\cb ^2} \leq \sum_{i\in I} |\langle f, \mu(U_i)^{1/2}\psi_{x_i}\rangle|^2 \leq  C_2\|f|\mathcal H\|{\cb ^2}.
      \end{equation*}
      This means that $(\mu(U_i)^{1/2}\psi_{x_i})_{i\in I}$ is a (Hilbert) frame in the usual sense. Of course, for the aim of Hilbert frames one may
      choose the trivial weight $m=1$ in Theorem \ref{thm:5.7}.
      
      One might ask whether the $L^1$-integrability condition $R\in \AAi$ is necessary in order to obtain a Hilbert frame by discretizing the continuous frame. 
      The crucial point in the proof of Theorem \ref{thm:5.7} is that the operator $\UP$ satisfies 
      \begin{equation}\label{eq:5.18}
        \|\UP-\operatorname{Id}|V(\mathcal H)\rightarrow V(\mathcal H)\| < 1.
      \end{equation}
      If one finds a method to prove this without using integrability assumptions on $R$ then the rest of the proof of Theorem \ref{thm:5.7} should still work. However, it is not clear
      to us how to do this in general.
      
      Concerning a complementary result F\"uhr gave the example of a continuous frame indexed by $\RR$ which does not admit a discretization by any regular grid of $\RR$ \cite[Example 1.6.9]{fu02-X}\footnote{The
      referenced result is not easily accessible using standard sources. However, the example referred to seems to be reproduced in \cite[Example 2.36]{fu05}}.
      
      {\cb This closes\footnote{We decided to omit Remark 5.2 from \cite{fora05} since sufficiency of the stated assumptions for a generalization of the discretization results to the setting of
      Remark 3.2 requires step-by-step confirmation. Such an endeavor is beyond the scope of this annotation.} Section \ref{sec:5}.}

\section*{Acknowledgment}
  This work was supported by the Austrian Science Fund (FWF)
  START-project FLAME (``Frames and Linear Operators for Acoustical
  Modeling and Parameter Estimation''; Y 551-N13) and the Vienna Science and 
Technology Fund (WWTF) 
  Young Investigators project CHARMED (``Computational harmonic analysis of 
high-dimensional biomedical data''; VRG12-009).
  
%%%%%%%%%%% END DOCUMENT %%%%%%%%%%%%%%%%%%%%%%%%%%%%%%%%%%%%%%%%%%%%%%%%%%%%%%

%%%%%%%%%%% BIBLIOGRAPHY %%%%%%%%%%%%%%%%%%%%%%%%%%%%%%%%%%%%%%%%%%%%%%%%%%%%%%
\bibliographystyle{abbrv}
%%%%% NuHAG-BibTeX %%%%%%%
%\bibliography{nhgbib,addbib}

\begin{thebibliography}{10}

\bibitem{fe87}
H.~G. {F}eichtinger.
\newblock {B}anach spaces of distributions defined by decomposition methods.
  {I}{I}.
\newblock {\em Math. Nachr.}, 132:207--237, 1987.

\bibitem{fegr85}
H.~G. {F}eichtinger and P.~{G}r{\"o}bner.
\newblock {B}anach spaces of distributions defined by decomposition methods.
  {I}.
\newblock {\em Math. Nachr.}, 123:97--120, 1985.

\bibitem{fo84}
G.~B. Folland.
\newblock Real analysis, 1984.

\bibitem{fora05}
M.~{F}ornasier and H.~{R}auhut.
\newblock {C}ontinuous frames, function spaces, and the discretization problem.
\newblock {\em J. Fourier Anal. Appl.}, 11(3):245--287, 2005.

\bibitem{fu02-X}
H.~F\"uhr.
\newblock {\em The Abstract Harmonic Analysis of Continuous wavelet
  Transforms}.
\newblock {T}hesis. Technical University Munich, 2002.

\bibitem{fu05}
H.~{F}{\"u}hr.
\newblock {\em {A}bstract {H}armonic {A}nalysis of {C}ontinuous {W}avelet
  {T}ransforms}, volume 1863 of {\em {L}ecture {N}otes in {M}athematics}.
\newblock {S}pringer-{V}erlag, {B}erlin, 2005.

\bibitem{gr91}
K.~{G}r{\"o}chenig.
\newblock {D}escribing functions: atomic decompositions versus frames.
\newblock {\em Monatsh. Math.}, 112(3):1--41, 1991.

\bibitem{gr01}
K.~{G}r{\"o}chenig.
\newblock {\em {F}oundations of {T}ime-{F}requency {A}nalysis}.
\newblock {A}ppl. {N}umer. {H}armon. {A}nal. {B}irkh{\"a}user, {B}oston,
  {M}{A}, 2001.

\bibitem{bahowi15}
N.~{H}olighaus, C.~{W}iesmeyr, and P.~{B}alazs.
\newblock {Continuous warped time-frequency representations - Coorbit spaces
  and discretization}.
\newblock {\em submitted to Applied and Computational Harmonic Analysis,
  preprint available: https://arxiv.org/abs/1503.05439}, 2017.

\bibitem{tr92}
H.~{T}riebel.
\newblock {\em {T}heory of {F}unction {S}paces {I}{I}}.
\newblock {M}onographs in {M}athematics 84. {B}irkh{\"a}user, {B}asel, 1992.

\bibitem{raul11}
T.~{U}llrich and H.~{R}auhut.
\newblock {G}eneralized coorbit space theory and inhomogeneous function spaces
  of {B}esov-{L}izorkin-{T}riebel type.
\newblock {\em J. Funct. Anal.}, 11:3299--3362, 2011.

\end{thebibliography}
%%%%%%%%%%%%%%%%%%%%%%%%%%

\end{document}